\documentclass[10pt]{article}
\usepackage{amsmath, amsthm, amssymb,todonotes, xcolor,stackrel}
\usepackage{graphicx,authblk,mathtools,relsize}
\usepackage[all]{xy}
\xyoption{all}
\topmargin  = -1cm
\textwidth  = 16cm \textheight = 24cm
\voffset-1cm
\hoffset=-2cm
\parindent  = 0pt
\pdfpageheight 11in
\pagestyle{empty}

\newtheorem{corollary}{Corollary}
\newtheorem{proposition}{Proposition}
\newtheorem{lemma}{Lemma}
\newtheorem{theorem}{Theorem}
\newtheorem{claim}{Claim}

\newtheorem*{theorem*}{Theorem}

\newtheorem{observation}{Observation}

\makeatletter
\DeclareRobustCommand*\cal{\@fontswitch\relax\mathcal}
\makeatother

\theoremstyle{definition}
\newtheorem{definition}{Definition}
\newtheorem{example}{Example}

\newcommand\wt{\widetilde}
\newcommand\ov{\overline}
\newcommand\inj{\rightarrowtail}
\newcommand\surj{\twoheadrightarrow}

\mathchardef\mhyphen="2D

\newcommand{\inv}{^{\raisebox{.2ex}{$\scriptscriptstyle-1$}}} 
\DeclareMathOperator{\im}{im} 

\newcommand{\lra}[1]{\overset{ #1 }{\longrightarrow}} 
\newcommand{\ul}{\underline}

\newcommand{\leftrarrows}{\mathrel{\raise.75ex\hbox{\oalign{%
  $\scriptstyle\leftarrow$\cr
  \vrule width0pt height.5ex$\hfil\scriptstyle\relbar$\cr}}}}
\newcommand{\lrightarrows}{\mathrel{\raise.75ex\hbox{\oalign{%
  $\scriptstyle\relbar$\hfil\cr
  $\scriptstyle\vrule width0pt height.5ex\smash\rightarrow$\cr}}}}
\newcommand{\Rrelbar}{\mathrel{\raise.75ex\hbox{\oalign{%
  $\scriptstyle\relbar$\cr
  \vrule width0pt height.5ex$\scriptstyle\relbar$}}}}

\makeatletter
\newcommand{\subjclass}[2][2010]{%
  \let\@oldtitle\@title%
  \gdef\@title{\@oldtitle\footnotetext{#1 \emph{Mathematics subject classification.} #2}}%
}
\newcommand{\keywords}[1]{%
  \let\@@oldtitle\@title%
  \gdef\@title{\@@oldtitle\footnotetext{\emph{Key words and phrases.} #1.}}%
}
\makeatother

\begin{document}

\title{On the structure of modules indexed by small categories}
\author{Crichton Ogle\thanks{corresponding author: ogle.1@osu.edu\\ MSC primary 16-XX; secondary 55-XX}, Sami Sultan} 
\affil{Dept. of Mathematics, OSU}
\date{}
\maketitle

\begin{abstract} Given a small catgory $\cal C$ and $\cal C$-diagram of vector spaces $M$ over a field $k$ (referred to as a $\cal C$-module), we show that $M$ admits a quasi-tame $\cal C$-module cover ${\cal QTC}(M)$, determined by its local structure ${\cal F}(M)$ when the local structure is stable (which holds whenever both $\cal C$ and $k$ are finite). ${\cal QTC}(M)$ is a finite direct sum of quasi-blocks determined by the associated graded local structure ${\cal F}(M)_*$ of $M$, and for all $M$ there is a natural isomorphism ${\cal F}({\cal QTC}(M))_*\cong{\cal F}(M)_*$ of associated graded local structures. When the local structure is stable, it determines an effectively computable non-negative integer invariant $e(M)$ - the excess of $M$ - that quantifies the degree to which the local structure fails to be in general position at each object of $\cal C$. When $M$ admits an inner product there is a surjection of $\cal C$-modules ${\cal QTC}(M)\surj M$ inducing the above isomorphism of associated graded local structures, which is an isomorphism iff $e(M)=0$. In the very special case $\cal C$ is the categorical realization of a finite totally ordered set, these methods recover the classical result that any finite one-dimensional persistence module decomposes as a direct sum of interval submodules in a manner unique up to re-ordering.
\end{abstract}


\section{Introduction} It is a consequence of Gabriel's structure theorem for quiver representations of type $A_n$ \cite{pg} that any finite 1-dimensional persistence module over a field $k$ decomposes into a direct sum of indecomposable interval modules; moreover, the decomposition is unique up to reordering. The barcodes (with multiplicity) associated to the original module correspond to these interval submodules, which are indexed by the set of connected subgraphs of the finite directed graph associated to the finite, totally ordered indexing set of the module. Modules that decompose in such a fashion are conventionally referred to as {\it tame}.
\vskip.2in
A central problem in topological data analysis has been to determine what, if anything, holds true for more complex types of modules, for example $n$-dimensional persistence modules \cite{cs, cz}, given the fact that these more general types of modules are almost never tame.
\vskip.2in

This paper provides an answer to that question; what follows is a summary of the contents. 
\vskip.2in

In section 2.1, we define the framework in which we will be working. We consider modules indexed by an arbitrary small category $\cal C$ (referred to as $\cal C$-modules in this paper) equipped with i) no additional structure, and ii) an {\it inner product}. The structural properties we later establish for such modules in section 2.2 are based on the fundamental notion of  a {\it multi-flag} of a vector space $V$, defined as a collection of subspaces of $V$ that includes $\{{\bf 0}\}, V$, and is closed under intersection. Equally important is the notion of {\it general position} for such an array. Using terminology made precise in section 2.3,  a $\cal C$-module $M$ determines a functor ${\cal F}:{\cal C}\to (multi\mhyphen flags/k)$ which associates to each $x\in obj({\cal C})$ a multi-flag ${\cal F}(M)(x)$ of the vector space $M(x)$, referred to as the {\it local structure} of $M$ at $x$. The local structure is naturally the direct limit of a directed system of recursively defined multi-flags $\{{\cal F}_n(M), \iota_n\}$, and is {\it stable} when this directed system stabilizes at a finite stage (with the limit written as ${\cal F}(M)$). This structure can be refined by the existence of an inner product on $M$ (section 2.4). Section 2.5 describes the behavior of the local structure with respect to sums and tensors of $\cal C$-modules, and for tensor products proves an algebraic K\"unneth Theorem for local structures (used later on in section 4).
\vskip.2in

Assuming a $\cal C$-module $M$ has stable local structure, the associated graded ${\cal F}_*(M)$ defines a $\cal C$-module ${\cal QTC}(M)$, referred to as the {\it quasi-tame cover} of $M$, constructed in section 3.1. Under suitable conditons on $M$ the quasi-tame cover is \emph{b-tame} (section 3.2), meaning it is a direct sum of {\it blocks} (generalizing the interval decomposition of 1-dimensional persistence modules). For all ${\cal C}$-modules $M$ there is a canonical isomorphism of associated gradeds
\begin{equation}\label{eqn:eq1}
{\cal F}({\cal QTC}(M))_*\xrightarrow{\cong} {\cal F}(M)_*
\end{equation}
The module ${\cal QTC}(M)$ represents the closest approximation to $M$ by a quasi-tame module (one which decomposes as a finite direct sum of quasi-blocks), and equals $M$ precisely when $M$ itself is quasi-tame. In section 3.3 we show that when $M$ has stable local structure and admits an inner product (is an $\cal IPC$-module), the map of associated gradeds in (1) is induced by a surjection of $\cal C$-modules
\begin{equation}\label{eqn:eq2}
{\cal QTC}(M)\surj M
\end{equation}
Morover, for $\cal IPC$-modules the following statements are equivalent
\begin{itemize}
\item The surjection in (\ref{eqn:eq2}) is an isomorphism;
\item $M$ is quasi-tame;
\item the local structure of $M$ is in general position at each object of $\cal C$;
\item the excess $e(M)$ of $M$ - a computable non-negative integral invariant of $M$ - is zero.
\end{itemize}
In section 3.4 we show that when $M$ is a 1-dim.~persistence module (i.e., when it is a $\cal C$-module where $\cal C$ is the categorical realization of a finite totally ordered set), there is an isomorphism ${\cal QTC}(M)\cong M$ which recovers the classical decomposition for such modules. In section 3.5 we show that all finite $n$-dimensional persistence modules have stable local structure.
\vskip.2in

Finally in section 4 we prove two results for topologically based $\cal C$-modules - those resulting via applying $H_*(_-)$ to a $\cal C$-diagram of spaces. The second of the two is a K\"unneth Theorem for such modules, derived via application of the algebraic K\"unneth Theorem of section 2.5 to this topologically based situation.
\vskip.2in

We would like to thank Dan Burghelea and Fedor Manin for their helpful comments on earlier drafts of this work, as well as Bill Dwyer for his contribution to the proof of the cofibrancy replacement result presented in section 4. We are indebted to Jakob Hansen for valuable feedback and discussions at various later stages of this work. This paper is part of a larger joint project with Sanjeevi Krishnan.
\vskip.5in


\section{$\cal C$-modules} 

\subsection{Preliminaries} Throughout we work over a fixed field $k$. Let $(vect/k)$ denote the category of finite-dimensional vector spaces over $k$, and linear homomorphisms between such. Additionally we denote by $(vect/k)_*$ the category of finite-dimensional vector spaces over $k$, and linear homomorphisms enriched over pointed sets with the base point corresponding the the zero linear map. 

\begin{definition} Given a small category $\cal C$, a \emph{$\cal C$-module} is a covariant functor $M:{\cal C}\to (vect/k)$. Similarly, given a small category ${\cal C}_*$ enriched over pointed sets, a \emph{basepointed $\cal C$-module} is a basepointed functor $M:{\cal C}_*\to (vect/k)_*$.
\end{definition}

 [Note: This basepointed structure above ensures that $Hom(M(x), M(y))$ always contains the zero linear map. The only place where this zero map will be needed is in showing that  the associated graded of the local structure for basepointed $\cal C$-modules distributes over finite sums of modules.]
 \vskip.2in
 
The category $({\cal C}\mhyphen mod)$ of $\cal C$-modules then has these functors as objects, with morphisms represented in the obvious way by natural transformations.  All functorial constructions on vector spaces extend to the objects of $({\cal C}\mhyphen mod)$ by objectwise application. In particular, one has the appropriate notions of
\begin{itemize}
\item monomorphisms, epimorphisms, short and long-exact sequences;
\item kernel and cokernel;
\item direct sums, Hom-spaces, tensor products;
\item linear combinations of morphisms.
\end{itemize}

With these constructs $({\cal C}\mhyphen mod)$ is an abelian category. 
\vskip.3in


\subsection{Multi-flags and general position} Recall that a {\it flag} in a vector space $V$ consists of a finite sequence of proper inclusions beginning at $\{{\bf 0}\}$ and ending at $V$:
\[
\underline{W} := \{W_i\}_{0\le i\le n} = \left\{\{{\bf 0}\} = W_0\subset W_1\subset W_2\subset\dots\subset W_m = V\right\}
\]
We will relax this structure in two different ways. 
\vskip.2in

\begin{definition} A \emph{semi-flag} in $V$ is a sequence of not necessarily proper inclusions\newline $\left\{\{{\bf 0}\} = W_0\subseteq W_1\subseteq W_2\subseteq\dots\subseteq W_m = V\right\}$. More generally, a \emph{multi-flag} in $V$ is a collection\newline ${\cal F} = \{W_\alpha\subseteq V\}$ of subspaces of $V$ containing $\{{\bf 0}\}, V$, partially ordered by inclusion, and closed under intersection. It is not required to be finite.
\end{definition}
\vskip.2in

Given an element $W\subseteq V$ of a multi-flag $\cal F$ associated to $V$, let $S_{{\cal F}}(W) := \{U\in {\cal F}\ |\ U\subsetneq W\}$ be the set of elements of $\cal F$ that are proper subsets of $W$, and set
\[
SS_{{\cal F}}(W) := \left(\displaystyle\sum_{U\in S_{{\cal F}}(W)} U\right)\cup \{{\bf 0}\}
\]
For each $W\in {\cal F}$, $SS_{{\cal F}}(W)$ is a subspace of $W$. The set of pairs of subspaces
\[
{\cal F}^p := \{(W, SS_{\cal F}(W))\}_{W\in {\cal F}}
\]
is canonically isomorphic to $\cal F$ via the identification
\[
{\cal F}\ni W \leftrightarrow (W,SS_{\cal F}(W))\in {\cal F}^p
\]
We need to consider different possible ways a subquotient might be represented by a pair in ${\cal F}^p$. To that end, note that if $W',W\in {\cal F}$ with $W'\subseteq W$, then $SS_{\cal F}(W')\subseteq SS_{\cal F}(W)$ and there is an inclusion of pairs $\iota: (W',SS_{\cal F}(W'))\hookrightarrow (W, SS_{\cal F}(W))$. The inclusion $\iota$ is a {\it q-isomorphism} if it induces an isomorphism of quotients
\[
\ov{\iota}:W_{\cal F} := W/SS_{\cal F}(W)\xrightarrow{\cong} W'/SS_{\cal F}(W') = W'_{\cal F}
\]
Finally we say that two elements $(W,SS_{\cal F}(W)), (W', SS_{\cal F}(W'))$ of ${\cal F}^p$ are {\it q-equivalent} if they are connected by a zig-zag sequence of q-isomorphisms. This defines an equivalence relation ``$\underset{q}{\sim}$" on ${\cal F}^p\cong {\cal F}$ and the {\it associated graded} ${\cal F}_*$ of $\cal F$ is defined as 
\begin{equation}\label{eqn:assoc}
{\cal F}_* := {\cal F}^p/{\underset{q}{\sim}}
\end{equation}
An element of ${\cal F}_*$ will be typically written as $[W_{\cal F}]$. Thus $[W_{\cal F}] = [W'_{\cal F}]$ iff $(W,SS_{\cal F}(W))\underset{q}{\sim} (W',SS_{\cal F}(W'))$. We say $[W_{\cal F}]\in\cal F$ is {\it without multiplicity} iff it  has only one representative (in other words, if $(W,SS_{\cal F}(W))$ is only q-equivalent to itself). 

\begin{proposition}\label{prop:multi} If $[\{{\bf 0}\}]\ne [W_{\cal F}]\in {\cal F}_*$, then $[W_{\cal F}]$ is without multiplicity.
\end{proposition}

\begin{proof} If $(W,SS_{\cal F}(W))\underset{q}{\sim} (W',SS_{\cal F}(W'))$ and $W\ne W'$, then in the zig-zag sequence of q-isomorphisms connecting $(W,SS_{\cal F}(W))$ and $(W',SS_{\cal F}(W'))$ there must be a q-isomorphism $(W_1,SS_{\cal F}(W_1))\hookrightarrow (W_2,SS_{\cal F}(W_2))$ where $W_1\hookrightarrow W_2$ is a proper inclusion. As $W_1,W_2\in {\cal F}$, this implies $W_1\in S_{F}(W_2)$, and so the induced isomorphism on quotients $W_1/SS_{\cal F}(W_1)\xrightarrow{\cong} W_2/SS_{\cal F}(W_2)$ must be the zero map. Hence 
\[
[W_{\cal F}] = [(W_1)_{\cal F}] = [W'_{\cal F}] = [\{{\bf 0}\}]
\]
\end{proof}
\vskip.2in

\begin{observation} For any multi-flag $\cal F$ of $V$, $\displaystyle\sum_{[W_{\cal F}]\in{{\cal F}_*}} dim(W_{\cal F}) \ge dim(V)$.
\end{observation}

\begin{definition} The \emph{excess} of a multi-flag $\cal F$ is $e({\cal F}) := \left[\displaystyle\sum_{[W_{\cal F}]\in{{\cal F}_*}} dim(W_{\cal F})\right] - dim(V)$.
\end{definition}

\begin{definition}\label{def:genpos} A multi-flag $\cal F$ in $V$ is in \emph{general position} iff  $e({\cal F}) = 0$.
\end{definition}

Any semi-flag $\cal F$ of $V$ is in general position; this is a direct consequence of the total ordering. Also the multi-flag $\cal G$ formed by a pair of subspaces $W_1, W_2\subset V$ and their common intersection (together with $\{{\bf 0}\}$ and $V$) is always in general position. For the proof of the next lemma, as well as in section 2.4 below, we will need the following definition.

\begin{definition} Let $V = (V, <\,\, ,\, >)$ be an inner product (IP) space. If $W_1\subseteq W_2\subset V$, we write\newline $(W_1\subset W_2)^\perp$ for the relative orthogonal complement of $W_1$ viewed as a subspace of $W_2$ equipped with the induced inner product (so that $W_2\cong W_1\oplus (W_1\subset W_2)^\perp$). 
\end{definition}
Note that $(W_1\subset W_2)^\perp = W_1^\perp\cap W_2$ when $W_1\subseteq W_2$ and $W_2$ is equipped with the induced inner product.

\begin{lemma}\label{lemma:2} If ${\cal G}_i$, $i = 1,2$ are two semi-flags in the inner product space $V$ and $\cal F$ is the smallest multi-flag containing ${\cal G}_1$ and ${\cal G}_2$ (in other words, it is the multi-flag generated by these two semi-flags), then $\cal F$ is in general position.
\end{lemma}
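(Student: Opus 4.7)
The plan is to describe ${\cal F}$ explicitly as the family of pairwise intersections from ${\cal G}_1$ and ${\cal G}_2$, then verify the dimension identity $\sum_{Y\in{\cal F}} dim(Y_{\cal F}) = dim(V)$ required by the Corollary via a two-variable telescoping calculation.

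First I would write ${\cal G}_1 = \{0 = U_0\subseteq U_1\subseteq\dots\subseteq U_m = V\}$ and ${\cal G}_2 = \{0 = W_0\subseteq W_1\subseteq\dots\subseteq W_n = V\}$. Because both are chains, $(U_i\cap W_j)\cap (U_{i'}\cap W_{j'}) = U_{\min(i,i')}\cap W_{\min(j,j')}$, so the family $\{U_i\cap W_j\mid 0\le i\le m,\, 0\le j\le n\}$ is already closed under intersection and contains $\{0\}$ and $V$. Hence, as a set of subspaces of $V$ (with coincidences identified), this family equals ${\cal F}$.

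Next, setting $f(i,j) := dim(U_i\cap W_j)$ and, for $1\le i\le m$ and $1\le j\le n$, $c(i,j) := dim(U_i\cap W_j) - dim\bigl((U_{i-1}\cap W_j) + (U_i\cap W_{j-1})\bigr)$, the Grassmann formula $dim(A+B) = dim(A)+dim(B)-dim(A\cap B)$ applied to $A=U_{i-1}\cap W_j$ and $B=U_i\cap W_{j-1}$ (with $A\cap B = U_{i-1}\cap W_{j-1}$) gives $c(i,j) = f(i,j) - f(i-1,j) - f(i,j-1) + f(i-1,j-1)$. A routine double telescoping yields $\sum_{i,j} c(i,j) = f(m,n) - f(0,n) - f(m,0) + f(0,0) = dim(V)$. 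For each nonzero $Y\in {\cal F}$ I would select the lexicographically smallest pair $(i_Y, j_Y)$ with $U_{i_Y}\cap W_{j_Y} = Y$; minimality forces $U_{i_Y-1}\cap W_{j_Y}\subsetneq Y$ and $U_{i_Y}\cap W_{j_Y-1}\subsetneq Y$, so both lie in $S(Y)$, whence $(U_{i_Y-1}\cap W_{j_Y})+(U_{i_Y}\cap W_{j_Y-1})\subseteq\sum_{U\in S(Y)} U$ and therefore $c(i_Y, j_Y)\ge dim(Y_{\cal F})$. Since $c(i,j)\ge 0$ always and different $Y$ give different canonical pairs, $\sum_{Y\in {\cal F}} dim(Y_{\cal F})\le \sum_{i,j} c(i,j) = dim(V)$; the Proposition supplies the reverse inequality, forcing equality, and the Corollary then gives general position.

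The only real bookkeeping hurdle is that distinct index pairs may represent the same subspace (for example when a semi-flag repeats, $U_{i-1}=U_i$), raising the worry that the $c(i,j)$ and $dim(Y_{\cal F})$ tallies get out of sync. The minimality choice resolves this cleanly: any non-canonical pair $(i,j)$ satisfies either $U_{i-1}\cap W_j = U_i\cap W_j$ or $U_i\cap W_{j-1} = U_i\cap W_j$, making $(U_{i-1}\cap W_j)+(U_i\cap W_{j-1}) = U_i\cap W_j$ and hence $c(i,j)=0$, so no spurious contribution appears.
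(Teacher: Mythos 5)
Your proof is correct, and it takes a genuinely different route from the paper's. The paper argues by induction on the length of the first semi-flag, reducing to the case of a length-$3$ first flag, and then explicitly exhibits compatible orthogonal splittings of the associated graded pieces to conclude the direct-sum decomposition. You instead sidestep all explicit orthogonality bookkeeping and make a pure dimension count: you identify ${\cal F}$ with the grid $\{U_i\cap W_j\}$, introduce the second-difference quantity $c(i,j)$, derive $\sum_{i,j}c(i,j)=\dim V$ by double telescoping, and then show each nonzero $Y\in{\cal F}$ injects into a distinct grid cell $(i_Y,j_Y)$ at which $c(i_Y,j_Y)\ge\dim Y_{\cal F}$. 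Combined with the always-valid inequality $\sum_Y\dim Y_{\cal F}\ge\dim V$ from the Proposition, this forces equality and hence general position via the Corollary. The paper's approach has the virtue of producing the direct-sum decomposition constructively (which is in the spirit of the rest of the paper, where the associated graded pieces are used as building blocks), while yours is more combinatorial and arguably more transparent, since it reduces the statement to a clean inclusion--exclusion identity and avoids any need to check compatibility of orthogonal complements under inclusions. A minor point worth making explicit in a write-up: for a nonzero $Y$ the canonical lex-minimal pair automatically satisfies $i_Y\ge1$ and $j_Y\ge1$ (otherwise $Y$ would be zero), so those pairs really do land inside the summation range $1\le i\le m$, $1\le j\le n$; and the $\{0\}$ element of ${\cal F}$ contributes $0$ to the left-hand sum, so it needs no grid cell. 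You touch on the duplicate-index issue, which is the other subtlety, and your handling of it is correct.
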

\vskip.1in
 Let ${\cal G}_i = \{W_{i,j}\}_{0\le j\le m_i}, i = 1,2$. Set $W^{j,k} := W_{1,j}\cap W_{2,k}$. Note that for each $i$, $\{W^{i,k}\}_{0\le k\le m_2}$ is a semi-flag in $W_{1,i}$, with the inclusion maps $W_{1,i}\hookrightarrow W_{1,i+1}$ inducing an inclusion of semi-flags $\{W^{i,k}\}_{0\le k\le m_2}\hookrightarrow \{W^{i+1,k}\}_{0\le k\le m_2}$. By induction on length in the first coordinate we may assume that the multi-flag of $W := W_{1,m_1-1}$ generated by $\wt{\cal G}_1 := \{W_{1,j}\}_{0\le j\le m_1-1}$ and $\wt{\cal G}_2 := \{W\cap W_{2,k}\}_{0\le k\le m_2}$ are in general position. To extend general position to the multi-flag on all of $V$, the induction step allows reduction to considering the case where the first semi-flag has only one middle term: 

\begin{claim} Given $W\subseteq V$, viewed as a semi-flag ${\cal G}'$ of $V$ of length 3, and the semi-flag ${\cal G}_2 = \{W_{2,j}\}_{0\le j\le m_2}$ as above, the multi-flag of $V$ generated by ${\cal G}'$ and ${\cal G}_2$ is in general position.
\end{claim}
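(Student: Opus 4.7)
By the Corollary above, it suffices to verify that $e({\cal F}) = 0$, i.e., that $\displaystyle\sum_{U\in{\cal F}} \dim U_{\cal F} = \dim V$. The plan is to enumerate the elements of ${\cal F}$ explicitly and to compute each associated-graded dimension via the modular law.

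Write $W^j := W\cap W_{2,j}$, so $W^0 = 0$ and $W^{m_2} = W$. One first checks that ${\cal F}$ consists precisely of the subspaces $\{W_{2,j}\}_{0\le j\le m_2}$ together with $\{W^j\}_{0\le j\le m_2}$: closure under intersection follows from $W_{2,i}\cap W_{2,j} = W_{2,\min(i,j)}$, $W^i\cap W^j = W^{\min(i,j)}$, and $W^i\cap W_{2,j} = W^{\min(i,j)}$. Pictorially, ${\cal F}$ has the shape of a ladder with rungs $W^j\subseteq W_{2,j}$.

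For each $U\in{\cal F}$ I would compute $\sum_{U'\in S(U)} U'$ and hence $\dim U_{\cal F}$ via \eqref{eqn:one}. For $U = W^j$ with $j\ge 1$, the proper predecessors in ${\cal F}$ are exactly $\{W^i\}_{i<j}$, so their sum is $W^{j-1}$ and $\dim (W^j)_{\cal F} = \dim W^j - \dim W^{j-1}$. For $U = W_{2,j}$ with $j\ge 1$, the proper predecessors sum to $W_{2,j-1} + W^j$, and the key modular identity
\[
W_{2,j-1}\cap W^j \;=\; W_{2,j-1}\cap W\cap W_{2,j} \;=\; W\cap W_{2,j-1} \;=\; W^{j-1}
\]
yields $\dim (W_{2,j})_{\cal F} = \dim W_{2,j} - \dim W_{2,j-1} - \dim W^j + \dim W^{j-1}$. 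Summing over $j = 1,\dots,m_2$, the $W^\bullet$-terms telescope to $+\dim W$ in the first sum and $-\dim W$ in the second, cancelling to leave $\sum_j(\dim W_{2,j} - \dim W_{2,j-1}) = \dim V$, as required. The Proposition then forces general position.

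The main obstacle I anticipate is the bookkeeping around coincidences (e.g.\ $W^j = W^{j-1}$ or $W^j = W_{2,j}$), since ${\cal F}$ must be treated as a set of distinct subspaces. Each such coincidence makes a formally duplicated summand contribute zero to the dimension count, leaving the total unchanged; alternatively, one may package the same computation as induction on $m_2$, with base case $m_2 = 1$ giving the semi-flag $\{0,W,V\}$, and inductive step adjoining the two new elements $W$ and $V$ to the multi-flag of $W_{2,m_2-1}$ supplied by the hypothesis, a two-line dimension check then producing the required increment $\dim V - \dim W_{2,m_2-1}$.
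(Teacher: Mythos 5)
Your proof is correct, and it takes a genuinely different route from the paper's. The paper argues geometrically: it observes that writing $W_{2,j} = (W\cap W_{2,j})\oplus(W\cap W_{2,j}\subset W_{2,j})^\perp$ makes the two semi-flag splittings of $W$ (by $W\cap{\cal G}_2$) and of $V$ (by ${\cal G}_2$) compatible, so that the relative orthogonal complement of $W_{2,k}$ in $W_{2,k+1}$ splits as the direct sum of the relative complement of $W\cap W_{2,k}$ in $W\cap W_{2,k+1}$ and the complementary relative piece; assembling these across all $k$ exhibits $V$ directly as a direct sum of the associated graded pieces of $\cal F$. Your argument instead enumerates $\cal F$ as the ``ladder'' $\{W^j\}\cup\{W_{2,j}\}$, computes $\dim U_{\cal F}$ for each rung via the modular identity $W_{2,j-1}\cap W^j = W^{j-1}$, and verifies $e({\cal F})=0$ by a telescoping sum; the proposition equating zero excess with general position then closes the argument without ever choosing or tracking an inner product. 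The trade-off: the paper's decomposition is more structural and produces the direct sum explicitly (which is arguably truer to the way general position will later be used for $\cal IPC$-modules), while yours is purely dimension-theoretic and, as you note, sidesteps all the bookkeeping about how orthogonal complements interact, at the small cost of handling degenerate coincidences ($W^j = W^{j-1}$, $W^j = W_{2,j}$, or $W_{2,j-1}=W_{2,j}$). Your remark that the formal duplicated terms contribute zero is correct and suffices; the alternative induction on $m_2$ you sketch would also work and is perhaps cleaner to write out in full.
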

\begin{proof} Without loss of generality, we may assume $V$ is equipped with an inner product. The multi-flag $\cal F$ in question is constructed by intersecting $W$ with the elements of ${\cal G}_2$,  producing the semi-flag ${\cal G}_2^W := W\cap {\cal G}_2 = \{W\cap W_{2,j}\}_{0\le j\le m_2}$ of $W$, which in turn includes into the semi-flag ${\cal G}_2$ of $V$. Constructed this way the direct-sum splittings of $W$ induced by the semi-flag $W\cap {\cal G}_2$ and of $V$ induced by the semi-flag ${\cal G}_2$ are compatible, in that if we write $W_{2,j}$ as $(W\cap W_{2,j})\oplus (W\cap W_{2,j}\subset W_{2,j})^\perp$ for each $j$, then the orthogonal complement of $W_{2,k}$ in $W_{2,k+1}$ is given as the direct sum of the orthogonal complement of $(W\cap W_{2,k})$ in $(W\cap W_{2,k+1})$ and the orthogonal complement of $(W\cap W_{2,k}\subset W_{2,k})^\perp$ in $(W\cap W_{2,k+1}\subset W_{2,k+1})^\perp$, which yields a direct-sum decomposition of $V$ indexed by the associated graded elements of $\cal F$, completing the proof both of the claim and of the lemma.
\end{proof}

On the other hand, one can construct simple examples of multi-flags which are not - in fact cannot be - in general position, as the following illustrates.

\begin{example} Let $\mathbb R\cong W_i\subset\mathbb R^2$ be three distinct 1-dimensional subspaces of $\mathbb R^2$ intersecting in the origin, and the $\cal F$ be the multi-flag generated by this data. Then $\cal F$ is not in general position.
\end{example}
\vskip.2in
\underbar{\bf Note}: Example 1 also illustrates the important distinction between a configuration of subspaces being of {\it finite type} (having finitely many isomorphism classes of configurations), and the stronger property of being in general position.

\vskip.2in
\begin{lemma}\label{lemma:generalposition} General position is preserved when closing a multiflag under
\begin{itemize}
\item subspace sums and intersections;
\item relative complements (for a fixed inner product)
\end{itemize}
Moreover, any submultiflag of a multiflag in general position is itself in general position.
\end{lemma}

\begin{proof}  These results follow from an equivalent characterization of general position in terms of special bases.
We say that $B$ is an ${\cal F}$-basis for $V$ if every non-zero subspace in ${\cal F}$ has a basis which is a subset of $B$. 
Similarly we say that $B$ is an ${\cal F}_*$-basis if $B$ can be written as the (disjoint) union over $W_{\cal F} \in {\cal F}_*$ of bases for $W_{\cal F}$. It is straightforward to see these two properties are equivalent - $B$ is an ${\cal F}$-basis iff $B$ is an ${\cal F}_*$-basis.  It is also clear that an ${\cal F}_*$ basis for $V$ exists iff ${\cal F}$ is in general position. 
\vskip.1in
Thus if $\cal F$ is in general position, we may fix a ${\cal F}$-basis $B$ for $V$. Then the closure $\wt{\cal F}$ of ${\cal F}$ under subspace sums and intersections is again in general position because $B$ is seen to also be a $\wt{\cal F}$-basis for $V$.
For any inner product on $V$ such that $V$ has an orthogonal ${\cal F}$-basis, the closure of ${\cal F}$ under relative orthogonal complements, sums, and intersections is in general position by the same argument. Finally, if $B$ is an ${\cal F}$ basis, then for any submultiflag ${\cal F}' \subset F$, $B$ is also and ${\cal F}'$ basis.
\end{proof}

\begin{corollary}\label{cor:genposiso} A multi-flag $\cal F$ on $V$ is in general position iff there is an isomorphism of vector spaces over $k$;
\begin{equation}\label{eqn:genposiso}
V\cong \bigoplus_{[W_{\cal F}]\in{{\cal F}_*}} W_{\cal F}
\end{equation}
\end{corollary}

\begin{proof} By the proof of the previous lemma, $\cal F$ is in general position iff there exists an $\cal F$-compatible basis for $V$. This latter statement is equivalent to the existence of an isomorphism as in (\ref{eqn:genposiso}).
\end{proof}
\vskip.2in

A multi-flag $\cal F$ of $V$ is a poset in a natural way; if $V_1,V_2\in {\cal F}$, then $V_1\le V_2$  as elements in $\cal F$ iff $V_1\subseteq V_2$ as subspaces of $V$.  

\begin{definition} If $\cal F$ is a multi-flag of $V$, $\cal G$ a multi-flag of $W$, a \emph{morphism} of multi-flags $(L,f):{\cal F}\to {\cal G}$ consists of
\begin{itemize}
\item a linear map from $L:V\to W$ and 
\item a map of posets $f:{\cal F}\to {\cal G}$ such that
\item for each $U\in {\cal F}$, $L(U)\subseteq f(U)$.
\end{itemize}
\end{definition}

\begin{definition} A morphism $(L,f)$ of multi-flags is \emph{closed} if for each $U\in {\cal F}, L(U) = f(U)$ (in this case the inclusion of $f$ is superfluous, and we will often write the morphism simply as $L$). $L$ is \emph{inverse-closed} if $L^{-1}(U')\in {\cal F}$ for every $U'\in {\cal G}$. It is \emph{bi-closed} if it is both closed and inverse-closed.
\end{definition}
We will denote the catgory of multi-flags and morphisms between such by $\{multi\mhyphen flags\}$.
 \vskip.2in 

Given an arbitrary collection of subspaces $T = \{W_\alpha\}$ of $V$, the multi-flag generated by $T$ is the smallest multi-flag containing each element of $T$. It can be constructed as the closure of $T$ under the operations i) inclusion of $\{{\bf 0}\}, V$ and ii) taking intersections.
\vskip.2in

If $L:V\to W$ is a linear map of vector spaces and $\cal F$ is a multi-flag of $V$, the multi-flag generated by $\{L(U)\ |\ U\in {\cal F}\}\cup \{W\}$ is a multi-flag of $W$ which we denote by $L({\cal F})$ (or $\cal F$ pushed forward by $L$). In the other direction, if $\cal G$ is a multi-flag of $W$, we write $L^{-1}[{\cal G}]$ for the multi-flag $\{L^{-1}[U]\ |\ U\in {\cal G}\}\cup \{\{{\bf 0}\}\}$ of $V$ (i.e., $\cal G$ pulled back by $L$; as intersections are preserved under taking inverse images, this will be a multi-flag once we include - if needed - $\{{\bf 0}\}$). Obviously $L$ defines morphisms of multi-flags ${\cal F}\xrightarrow{(L,\iota)} L({\cal F})$, $L^{-1}[{\cal G}]\xrightarrow{(L,\iota')} {\cal G}$. 

\begin{definition} The \emph{bi-closure} of $L$ (with respect to the multi-flags $\cal F$ and $\cal G$) is formed inductively as follows:
\begin{itemize}
\item Set ${\cal F}_0 = {\cal F}$, ${\cal G}_0 = {\cal G}$;
\item For $n\ge 0$ let ${\cal F}_{n+1}$ be the multi-flag generated by ${\cal F}_n$ and $L^{-1}({\cal G}_n)$;
\item For $n\ge 1$ let ${\cal G}_n$ be the multi-flag generated by ${\cal G}_{n-1}$ and $L({\cal F}_n)$;
\item Let ${\cal F}_\infty = \varinjlim {\cal F}_n, {\cal G}_\infty = \varinjlim {\cal G}_n$.
\end{itemize}
\end{definition}
\vskip.1in
The original morphism $L$ extends uniquely to a biclosed morphism of multi-flags $L :{\cal F}_\infty\to {\cal G}_\infty$. Moreover, it is easily seen that ${\cal F}_\infty$ and ${\cal G}_\infty$ are the smallest multi-flags containing $\cal F$ and $\cal G$ respectively for which the linear transformation $L$ induces a bi-closed morphism $L:{\cal F}_\infty\to {\cal G}_\infty$.
\vskip.2in

\begin{proposition}\label{prop:no-m-excess}
 Let $L:{\cal F}\to {\cal G}$ be a bi-closed morphism of multiflags, with $L_*:{\cal F}_*\to {\cal G}_*$ the induced map of associated gradeds. Then  $L^{-1}_*:im(L_*) \to {\cal F}_*$ is a well-defined function on $im(L_*)\backslash \{{\bf 0}\}$.
 \end{proposition}
 
 \begin{proof} Assume given $X\in {\cal F}$ with $\{{\bf 0}\}\ne [L(X)_{\cal G}]\in {\cal G}_*$. Note that $L(X)\in {\cal G}$ as $L$ is closed, and so $X' := L^{-1}(L(X))\in {\cal F}$ as $L$ is inverse-closed. There is an obvious inclusion of subspaces $X\overset{\iota}{\hookrightarrow} X'$. If this were a proper inclusion, then as both $X,X'\in {\cal F}$, $X\in S_{\cal F}(X')$, implying that the map of quotients
 \[
X/SS_{\cal F}(X)\xrightarrow{\iota} X'/SS_{\cal F}(X')
 \]
 is the zero map. However, $L(X) = L(X')$, implying
 \[
 L_*([X_{\cal F}])\xrightarrow{L_*(\iota)} L_*([X'_{\cal F}]) = [L(X)_{\cal G}]
 \]
 is the identity map. If $0 = L_*(\iota) = Id:[L(X)_{\cal G}]\to [L(X)_{\cal G}]$, then $[L(X)_{\cal G}]=\{{\bf 0}\}$, contradicting the original assumption. Thus $X=X'$. Suppose $[Y_{\cal F}]\in {\cal F}_*$ with $L_*([Y_{\cal F}]) = [L(Y)_{\cal G}]= [L(X)_{\cal G}]\ne \{{\bf 0}\}$. By Proposition \ref{prop:multi}, both $[L(X)_{\cal G}]$ and $[L(Y)_{\cal G}]$ are without multiplicity, implying $L(X) = L(Y)$. But then $L^{-1}(L(X))=L^{-1}(L(Y))$; by the above argument this implies $X=Y$.
 \end{proof}
 \vskip.2in

\begin{theorem}\label{thm:tech}
 Let $\cal F$ be a multi-flag in $V$, $\cal G$ a multi-flag in $W$, and $L:{\cal F}\to {\cal G}$ be a bi-closed morphism of multiflags induced by a linear transformation $L:V\to W$. If $[X_{\cal F}]\in {\cal F}_*$ and $L_*([X_{\cal F}]) = [L(X)_{\cal G}]\ne [{\bf 0}]\in {\cal G}_*$, then $L$ maps $X_{\cal F} = X/SS_{\cal F}(X)$ isomorphically to $L(X)/L(SS_{\cal F}(X)) = L(X)/SS_{\cal G}(L(X))$. 
 \end{theorem}
 
 \begin{proof} As $[L(X)_{\cal G}]\ne \{{\bf 0}\}$ it must also hold that $[X_{\cal F}]\ne \{{\bf 0}\}$, and so both must be without multiplicy by Proposition \ref{prop:multi}. 
 
 Clearly $L$ maps $X\subseteq V$ surjectively to its image $L(X)\subseteq W$. Because $L$ is closed, $L(SS_{\cal F}(X))\subseteq SS_{\cal G}(L(X))$. As $L$ is also inverse-closed, $X = L^{-1}(L(X))$ by the proof of Proposition \ref{prop:no-m-excess}. Hence $Z\in S_{\cal G}(L(X))$ implies $Z' = L^{-1}(Z)\in S_{\cal F}(X)$, and as $Z\subset im(L)$, $L(Z') = Z$, yielding the equality $L(SS_{\cal F}(X)) = SS_{\cal G}(L(X))$. The fact that $X=L^{-1}(L(X))$ implies $K = ker(L)\in{\cal F}$ is a proper subset of $X$, and thus $K\subseteq SS_{\cal F}(X)$. Taken together, these facts imply that the surjection $X_{\cal F} = X/SS_{\cal F}(X)\xrightarrow{L} L(X)/L(SS_{\cal F}(X)) = L(X)/SS_{\cal G}(L(X))$ must be an isomorphism.
 \end{proof}

\vskip.3in


\subsection{The local structure of a $\cal C$-module}
Let  $M$ be a $\cal C$-module.
\begin{definition} A \emph{multi-flag of $M$} or \emph{$M$-multi-flag}\,  is a functor $F:{\cal C}\to \{multi\mhyphen flags\}$ which assigns 
\begin{itemize}
\item to each $x\in obj({\cal C})$ a multi-flag $F(x)$ of $M(x)$;
\item to each $\phi\in Hom_{\cal C}(x,y)$ a morphism of multi-flags $F(x)\to F(y)$
\end{itemize}
\end{definition}
\vskip.1in

To any $\cal C$-module $M$ we may associate the multi-flag ${\cal F}_0(M)$ which assigns to each $x\in obj({\cal C})$ the multi-flag $\{\{{\bf 0}\}, M(x)\}$ of $M(x)$. This is referred to as the {\it trivial} multi-flag of $M$. A multi-flag on $M$ is {\it closed} resp.~{\it inverse-closed} resp.~{\it bi-closed} if that property holds for each morphism in the module.
\vskip.2in

A $\cal C$-module $M$ determines a bi-closed multi-flag on $M$. Precisely, the {\it local structure} ${\cal F}(M)$ of $M$ is defined recursively at each $x\in obj({\cal C})$ as follows: let $S_1(x)$ denote the set of morphisms of $\cal C$ originating at $x$, and $S_2(x)$ the set of morphisms terminating at $x$, $x\in obj({\cal C})$ (note that both sets contain $Id_x:x\to x$). Then
\vskip.05in
\begin{enumerate}
\item[\underbar{LS1}] ${\cal F}_0(M)(x) =$ the multi-flag of $M(x)$ generated by
\[
\{\ker(M(\phi):M(x)\to M(y))\}_{\phi\in S_1(x)}\cup \{im(M(\psi)) : M(z)\to M(x)\}_{\psi\in S_2(x)};
\]
\item[\underbar{LS2}]  For $n\ge 0$, ${\cal F}_{n+1}(M)(x) =$ the multi-flag of $M(x)$ generated by
\begin{itemize}
\item[{LS2.1}] $M(\phi)^{-1}(W)\subset M(x)$, where $W\in{\cal F}_n(M)(y)$ and $\phi\in Hom_{\cal C}(x,y)\subseteq S_1(x)$;
\item [{LS2.2}] $M(\psi)(W)\subset M(x)$, where $W\in{\cal F}_n(M)(z)$ and $\psi\in Hom_{\cal C}(z,x)\subseteq S_2(x)$;
\end{itemize}
\item [\underbar{LS3}]${\cal F}(M)(x) = \varinjlim {\cal F}_n(M)(x)$.
\end{enumerate}

More generally, starting with a multi-flag $F$ on $M$, the local structure of $M$ relative to $F$ is arrived at in exactly the same fashion, but starting in LS1 with the multi-flag $F(x)$ at each object $x$. The resulting direct limit is denoted ${\cal F}^F(M)$ (note: by definition, any multi-flag $F$ on $M$ must contain the trivial multi-flag ${\cal F}_0(M)$). Thus the local structure of $M$ (without superscript) is the local structure of $M$ relative to the trivial multi-flag on $M$, which we will denote by ${\cal F}_{-1}(M)(x)$ at each object. In almost all cases we will only be concerned with the local structure relative to the trivial multi-flag on $M$.

\begin{proposition}\label{prop:invimage} For any multi-flag $F$ on $M$, ${\cal F}^F(M)$ is the smallest bi-closed multi-flag on $M$ containing $F$.
\end{proposition}

\begin{proof} This is an immediate consequence of property (LS2). 
\end{proof}

\begin{definition} The local structure of a $\cal C$-module $M$ is the functor ${\cal F}(M):{\cal C}\to \{multi\mhyphen flags\}$, which associates to a vertex $x\in obj({\cal C})$ the multi-flag ${\cal F}(M)(x)$, and a morphism $\phi\in Hom_{\cal C}(x,y)$ the induced bi-closed morphism of multi-flags ${\cal F}(\phi): {\cal F}(M)(x)\to {\cal F}(M)(y)$.
\end{definition}

A key question arises as to whether the direct limit used in defining ${\cal F}(M)(x)$ stablizes at a finite stage. This motivates

\begin{definition} The local structure on $M$ is \emph{locally stable} at $x\in obj({\cal C})$ iff there exists $N = N_x$ such that ${\cal F}_n(M)(x)\inj {\cal F}_{n+1}(M)(x)$ is the identity map whenever $n\ge N$. It is \emph{stable} if it is locally stable at each object. It is \emph{strongly stable} if for all \emph{finite} multi-flags $F$ on $M$ there exists $N = N(F)$ such that ${\cal F}^F(M)(x) = {\cal F}^F_N(M)(x)$ for all $x\in obj({\cal C})$.
\end{definition}

In almost all applications of this definition we will only be concerned with stability, not the related notion of strong stability.
\vskip.2in

The following result identifies the effect of a morphism in $M$ on the associated graded limit ${\cal F}(M)_*$. 

\begin{theorem}\label{thm:1} Let $M$ be a $\cal C$-module with stable local structure. Then for all $x,y,z\in obj({\cal C})$, $W\in {\cal F}(M)(x)$, $\phi\in Hom_{\cal C}(z,x)$, and $\psi\in Hom_{\cal C}(x,y)$
\begin{enumerate}
\item The morphisms of $M$  induce well-defined maps of associated graded sets
\[
M(\psi)_*:{\cal F}(M)_*(x)\to {\cal F}(M)_*(y)
\]
while their inverses yield multi-functions of associated graded sets
\[
M(\phi)^{-1}_*: im(M(\phi)_*)\to {\cal F}(M)_*(z)
\]
with $M(\phi)^{-1}_*$ defining a function on $im(M(\phi)_*)\backslash\{{\bf 0}\}$.
\item $M(\psi)(W)\in {\cal F}(M)(y)$, and either $M(\psi)(W_{{\cal F}(M)(x)}) = \{{\bf 0}\}$, or $M(\psi):W_{{\cal F}(M)(x)}\xrightarrow{\cong}M(\psi)(W_{{\cal F}(M)(x)}) = (M(\psi)(W))_{{\cal F}(M)(y)}$;
\item if $[W_{\cal F}]\ne \{{\bf 0}\}$, then either $[W_{{\cal F}(M)(x)}]\notin im(M(\phi)_*)$, or there is a unique element $U \in M(\phi)^{-1}_*( im(M(\phi)_*))\subset {\cal F}(M)_*(z)$ with $M(\phi):U_{{\cal F}(M)(z)}\xrightarrow{\cong} W_{{\cal F}(M)(x)}$.
\end{enumerate}
\end{theorem}

\begin{proof} Given that ${\cal F}(M)$ is bi-closed with respect to each morphism in $\cal C$, these three statements are an immediate consequence of Theorem \ref{thm:tech}.
\end{proof}
\vskip.1in

We will use the notion of general position, discussed above, to define excess.
\vskip.1in

{\bf\underbar{Excess for local structure}} The {\it excess} of a $\cal C$-module $M$ is
\[
e(M) = \sum_{x\in obj({\cal C})} e({\cal F}(M)(x))
\]

\begin{definition} The local structure ${\cal F}(M)$ is in \emph{general position} iff $e(M) = 0$.
\end{definition}
\vskip.1in

Note that as $M(x)$ is finite-dimensional for each $x\in obj({\cal C})$, ${\cal F}(M)(x)$ must be locally stable at $x$ if it is in object general position (although general position is a much more restrictive property).
\vskip.3in


\subsection{Inner product structures} It will be useful to consider two refinements of the category $(vect/k)$.
\begin{enumerate}
\item $(WIP/k)$, the category whose objects are inner product (IP)-spaces $V = (V,<\ ,\ >_V)$ and whose morphisms are linear transformations (no compatibility required with respect to the inner product structures on the domain and range);
\item $(PIP/k)$, the wide partial subcategory of $(WIP/k)$ whose morphisms
\[
L:(V,<\ ,\ >_V)\to (W,<\ ,\ >_W)
\]
are {\it partial isometries}; that is, $\wt{L}: ker(L)^\perp\to W$ is an isometric embedding, where $ker(L)^\perp\subset V$ denotes the orthogonal complement of $ker(L)\subset V$ in $V$ with respect to the inner product $<\ ,\ >_V$, and $\wt{L}$ is the restriction of $L$ to $ker(L)^\perp$.
\end{enumerate}

[{\bf\underbar{Note of explanation}}: A {\it partial subcategory} (aka pre-subcategory) ${\cal C}$ of a category ${\cal D}$ refers to i) a collection of objects $obj({\cal C})\subset obj({\cal D})$ and morphisms $hom({\cal C})\subset hom({\cal D})$ for which the composition of morphisms in $\cal C$, which exists in $\cal D$, may not lie in $\cal C$. A fundamental complication in dealing with partial isometries vs isometries is that they need not be closed under composition - a good discussion of this issue is given in \cite{fp}. Partial categories arise in other contexts; for example partial monoids as defined in \cite{gs} may be viewed as a partial category with a single object.]
\vskip.2in
There are obvious transformations
\[
(IP/k)\xrightarrow{\iota_{ip}} (WIP/k)\xrightarrow{p_{wip}} (vect/k)
\]
where the first map is the inclusion which is the identity on objects, while the second map forgets the inner product on objects and is the identity on transformations between two fixed objects. We will call $\cal D$ an {\it IP-category} if ${\cal D}$ is a subcatgory of the partial category $(PIP/k)$. 
\vskip.2in

\begin{definition}
Given a $\cal C$-module $M:{\cal C}\to (vect/k)$ a \emph{weak inner product} on $M$ is a factorization 
\[
M: {\cal C}\to (WIP/k)\xrightarrow{p_{wip}} (vect/k)
\]
while an \emph{inner product} on $M$ is a further factorization through (or lift to) an $IP$ category ${\cal D}\subset (IP/k)$:
\[
M: {\cal C}\to {\cal D}\hookrightarrow (IP/k)\xrightarrow{\iota_{ip}}(WIP/k)\xrightarrow{p_{wip}} (vect/k)
\]
\end{definition}

A $\cal WIPC$-module will refer to a $\cal C$-module $M$ equipped with a weak inner product, while an $\cal IPC$-module is a $\cal C$-module that is equipped with an actual inner product, in the above sense. As any vector space admits a (non-unique) inner product, we see that

\begin{proposition} Any $\cal C$-module $M$ admits a non-canonical representation as a $\cal WIPC$-module.
\end{proposition}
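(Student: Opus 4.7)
The plan is to build a lift $\wt M:{\cal C}\to (WIP/k)$ of $M$ through $p_{wip}$ by making an independent, arbitrary choice of inner product on each vector space in the image of $M$, and then to observe that functoriality is free because the morphisms of $(WIP/k)$ carry no compatibility requirement with respect to the chosen inner products.

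First, using the fact that $\cal C$ has only finitely many objects and that each $M(x)$ is finite dimensional, I would for every object $x\in obj({\cal C})$ choose an arbitrary inner product $\langle\ ,\ \rangle_x$ on $M(x)$ — for instance, fix a basis of $M(x)$ and declare it orthonormal — and set $\wt M(x) := (M(x),\langle\ ,\ \rangle_x)\in obj(WIP/k)$. On morphisms, define $\wt M(\phi_{xy}) := M(\phi_{xy})$; this is a valid $(WIP/k)$-morphism by the definition of that category, where morphisms between IP-spaces are precisely linear transformations with no further constraint. Functoriality of $\wt M$ is inherited from functoriality of $M$, since composition in $(WIP/k)$ is composition of the underlying linear maps, and the identity is the identity linear map. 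By construction $p_{wip}\circ \wt M = M$, giving the desired factorization and hence the claimed $\cal WIPC$-structure on $M$.

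There is essentially no obstacle: the content of the proposition is precisely that the looseness built into $(WIP/k)$ — no isometry condition on morphisms, no orthogonality of kernels — decouples the inner product choices at different objects, so existence reduces to the elementary fact that every finite dimensional $k$-vector space admits an inner product. The non-canonicity asserted in the statement is likewise transparent from the construction: the lift $\wt M$ depends on the family of choices $\{\langle\ ,\ \rangle_x\}_{x\in obj({\cal C})}$, and distinct such families generically produce distinct $\cal WIPC$-structures lying over the same $\cal C$-module $M$. This should be contrasted with the much more delicate question, deferred to section 3.4, of whether the factorization can be refined through $(IP/k)$, where the orthogonality condition on kernels couples the choices along morphisms of $\cal C$ and creates a genuine obstruction.
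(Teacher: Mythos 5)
Your proof is correct and is essentially the paper's own argument, which the author compresses into the one-line observation preceding the proposition that every vector space admits a (non-unique) inner product and that $(WIP/k)$-morphisms carry no compatibility constraint. Your version just spells out the lift $\wt M$ and checks functoriality explicitly.
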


The question as to whether a $\cal C$-module $M$ can be represented as an $\cal IPC$-module, however, is a much more delicate issue.
\vskip.2in

Given a $\cal C$-module $M$ and a morphism $\phi\in Hom_{\cal C}(x,y)$, we set $KM_{\phi} := \ker(\phi : M(x)\to M(y)).$ We note that a $\cal C$-module $M$ is an $\cal IPC$-module, iff
\begin{itemize}
\item for all $x\in obj({\cal C})$, $M(x)$ comes equipped with an inner product $<\,\, ,\, >_x$;
\item for all $\phi\in Hom_{\cal C}(x,y)$, the map $\wt{\phi} : KM_{\phi}^\perp\to M(y)$ is an isometric embedding, where $\wt{\phi}$ denotes the restriction of $\phi$ to $KM_{\phi}^\perp = $ the orthogonal complement of $KM_{\phi}\subset M(x)$ with respect to the inner product $<\,\, ,\, >_x$. In other words, 
\[
<\phi({\bf v}), \phi({\bf w})>_y = <{\bf v}, {\bf w}>_x,\qquad \forall\, {\bf v}, {\bf w}\in KM_{\phi}^\perp
\]
\end{itemize}

\begin{definition} Given a multi-flag $\cal F$ on an inner product space $(V, <\,\, ,\, >)$, we will say the inner product is \emph{$\cal F$-compatible} - and that $V$ is an $\cal F$IP-space - if every q-isomorphism $\iota:(W, SS_{\cal F}(W))\to (W',SS_{\cal F}(W'))$ induces, upon restriction to relative orthogonal complements, an isomorphism
\begin{equation}\label{eqn:fipc}
\wt{W}_{\cal F} := (SS_{\cal F}(W)\subset W)^\perp\xrightarrow[\cong]{\iota} (SS_{\cal F}(W'),W')^\perp = \wt{W}'_{\cal F}
\end{equation}
An $\cal FIPC$-module $M$ is an $\cal IPC$-module for which the inner product $<\,\, ,\, >_x$ on $M(x)$ is ${\cal F}(M)(x)$-compatible for each $x\in obj({\cal C})$.
\end{definition}
\vskip.2in
For an arbitrary multi-flag on $M$ that is not necessarily biclosed, admitting an $\cal FIPC$-structure is in general a strictly stronger condition than admitting an $\cal IPC$-structure. However these notions turn out to be equivalent for the local structure of a $\cal C$-module $M$.
\vskip.2in

\begin{lemma} If $M$ is an $\cal IPC$-module with stable local structure, then it is an $\cal FIPC$-module.
\end{lemma}

\begin{proof} Let $W\in {\cal F}(M)(x)$. Then $[W_{\cal F}] = \{{\bf 0}\}$ iff $(SS_{\cal F}(W)\subset W)^\perp = \{{\bf 0}\}$, for which the condition in (\ref{eqn:fipc}) is trivially satisfied. On the other hand, if $[W_{\cal F}]\ne \{{\bf 0}\}$, Proposition \ref{prop:multi} implies the q-isomorphism $\iota$ in the above definition must be the identity map, again implying $\cal F$-compatibility for trivial reasons.
\end{proof}
For this reason, in what follows we will only need to assume an $\cal IPC$-structure on $M$, as the $\cal F$-compatibility of that structure comes along for free. Assume now that $M$ is equipped with such a structure. In this case all morphisms $M(\phi):M(x)\to M(y)$ are partial isometries which preserve relative orthogonal complements: 
\[
\wt{W}_{\cal F}=(SS_{{\cal F}(M)(x)}(W)\subset W)^\perp\xrightarrow{M(\phi)}(SS_{{\cal F}(M)(y)}(M(\phi)(W))\subset M(\phi)(W))^\perp
\]
This map is either $0$ or an isometry by the same argument appearing in the proof of Theorem \ref{thm:1}; moreover, these relative orthogonal complements remain invariant under q-isomorphisms. A similar analysis applies for inverse images. Consequently, we can lift the arguments given in Theorem \ref{thm:1} from subquotients to orthogonal complements, yielding

\begin{theorem}\label{thm:2} Let $M$ be an $\cal IPC$-module with stable local structure. Then for all $x,y,z\in obj({\cal C})$, $W\in {\cal F}(M)(x)$, $\phi\in Hom_{\cal C}(z,x)$, and $\psi\in Hom_{\cal C}(x,y)$
\begin{enumerate}
\item The morphism $M(\psi)$ induces a well-defined map of associated graded sets
\[
M(\psi)_*:{\cal F}(M)_*(x)\to {\cal F}(M)_*(y)
\]
while the inverse of $M(\phi)_*$ yields a multi-function
\[
M(\phi)^{-1}_*: im(M(\phi)_*)\to {\cal F}(M)_*(z)
\]
which is a function on $im(M(\phi)_*)\backslash\{{\bf 0}\}$.
\item $M(\psi)(W)\in {\cal F}(M)(y)$, and either $M(\psi)(\wt{W}_{{\cal F}(M)(x)}) = \{{\bf 0}\}$, or $M(\psi):\wt{W}_{{\cal F}(M)(x)}\xrightarrow{\cong}M(\psi)(\wt{W}_{{\cal F}(M)(x)}) = \wt{(M(\psi)(W))}_{{\cal F}(M)(y)}$;
\item if $[W_{{\cal F}(M)(x)}]\ne\{{\bf 0}\}$, then either $[W_{{\cal F}(M)(x)}]\notin im(M(\phi)_*)$, or there is a unique element $U = M(\phi)^{-1}(W)\in {\cal F}(M)(z)$ with $M(\phi)_*([U_{{\cal F}(M)(z)}]) = [W_{{\cal F}(M)(x)}]$ and  $M(\phi):\wt{U}_{{\cal F}(M)(z)}\xrightarrow{\cong} \wt{W}_{{\cal F}(M)(x)}$.
\end{enumerate}
\end{theorem}

\begin{proof} Same as above.
\end{proof}
\vskip.3in


\subsection{Sums, Tensors, and the Algebraic K\"unneth Theorem}

Let $\Delta \cal{C}$ be the full subcategory  of $\cal{C} \times \cal{C}$ whose object set is the diagonal of $\cal{C} \times \cal{C}$.
We define the direct sum of 2 basepointed modules $M:{\cal C}_*\to (vect/k)_*$ and $N:{\cal C}_*\to (vect/k)_*$ to be the functor $M\oplus N:\Delta{\cal C}_*\to (vect/k)_*$ defined componentwise on objects and morphisms. Due to the necessity of introducing zero maps, this definition is reminiscent of the definition of the Whitney Sum for vector bundles.
We define the direct sum of multiflags to be the multiflag of pairwise direct sums, i.e. ${\cal F} \oplus {\cal G}:=\{U \oplus W \ \mid \ U \in {\cal F}, \ W \in {\cal G}\}$

\vskip.2in 
We will denote by $\{{\bf 0}\}$ the trivial multiflag on the zero vector space.
\vskip.2in

\begin{lemma}\label{lemma:sumofmultiflags} The associated graded of the sum of two multiflags is the union of each of their respective associated gradeds. i.e. 
$
({\cal F} \oplus {\cal G})_* = ({\cal F})_* \oplus \{{\bf 0}\} \ \cup \ \{{\bf 0}\} \oplus ({\cal G})_*
$
\end{lemma}

\begin{proof}
Every subspace in ${\cal F} \oplus {\cal G}$ of the form $U \oplus W$, $U \neq \{{\bf0}\} \neq W$ contains the subspaces $U \oplus \{{\bf 0}\}$ and $\{{\bf 0}\} \oplus W$ as proper subspaces, hence is zero in the associated graded.
\end{proof}
\vskip.2in

While the local structure for basepointed modules does not distribute over direct sums on the level of multiflags, it does at the level of the associated graded. In particular, the associated graded of a stable local structure distributes over direct sums.

\begin{lemma} Let $M, N$ be basepointed as above. For $i\geq -1$, the following hold
\begin{enumerate}
\item ${\cal F}_i(M \oplus N)(x) 
	\ \subseteq \ 
	{\cal F}_i(M)(x)
	\ \oplus \ 
	{\cal F}_i(N)(x)$
\item ${\cal F}_i(M \oplus N)_*(x) 
	\ = \ 
	\left(
	{\cal F}_i(M)(x)
	\ \oplus \ 
	{\cal F}_i(N)(x)
	\right)_*$
\item ${\cal F}(M \oplus N)_*(x) 
	\ = \ 
	\left(
	{\cal F}(M)(x)
	\ \oplus \ 
	{\cal F}(N)(x)
	\right)_* = {\cal F}(M)_*(x)
	\ \oplus \ 
	{\cal F}(N)_*(x)$ if the local structures of $M$ and $N$ are stable.
\end{enumerate}
\end{lemma}
\begin{proof}
1 follows from the construction of direct sums of modules and of local structure. 2 follows from 1 by applying Lemma \ref{lemma:sumofmultiflags}. 3 is a restatement of 2 when $i=\infty$.
\end{proof}
\vskip.2in

We now return to the normal unbasepointed setting. 

\begin{definition} The tensor product of two modules $M:{\cal C}\to (vect/k)$ and $N:{\cal D}\to (vect/k)$ is the functor $M\otimes N:{\cal C} \times {\cal D}\to (vect/k)$ given componentwise on both objects and morphisms. In the same vein, we define the tensor product of multiflags to be the multiflag of pairwise tensor products, i.e. ${\cal F} \otimes {\cal G}:=\{U \otimes W \ \mid \ U \in {\cal F}, \ W \in {\cal G}\}$.
\end{definition}
\vskip.2in

\begin{lemma}\label{lemma:tensorofmultiflags} The associated graded distributes over tensor products, i.e.
$
({\cal F} \otimes {\cal G})_* = ({\cal F})_* \times ({\cal G})_*
$
\end{lemma}

\begin{proof} We first note that
$$
\frac{U \otimes W}{SS_{{\cal F} \otimes {\cal G}}(U \otimes W)}
=
\frac{U}{SS_{\cal F} U} \otimes \frac{W}{SS_{\cal G} W}.
$$

As
$$
\frac{U}{SS_{\cal F} U} \otimes \frac{W}{SS_{\cal G} W}
=
\frac{U \otimes W}{(U \otimes SS_{\cal G} W) \ + \ (SS_{\cal F} U \otimes W)}
$$
the desired equality follows from
$$
SS_{{\cal F} \otimes {\cal G}}(U \otimes W)
=
(U \otimes SS_{\cal G} W) \ + \ (SS_{\cal F} U \otimes W).
$$
In a similar vein, these equalities show that $(U,SS_{\cal F}(U))\hookrightarrow (U',SS_{\cal F}(U'))$ and $(W,SS_{\cal G}(W))\hookrightarrow (W',SS_{\cal G}(W'))$ are q-isomorphisms iff $(U\otimes W,SS_{{\cal F}\otimes{\cal G}}(U\otimes W))\hookrightarrow (U'\otimes W',SS_{{\cal F}\otimes{\cal G}}(U'\otimes W'))$ is one as well. The result follows.
\end{proof}
\vskip.2in

\begin{theorem}\label{thm:kunneth} (K\"unneth Theorem for Local Structure) The local structure of $M\otimes N$ distributes over the tensor product at every finite stage (hence so does the stable local structure) both at the level of multiflags and at the level of the associated graded. i.e. for all $i\geq -1$ and $i = \infty$,
\begin{enumerate}
\item $
	{\cal F}_{i}(M \otimes N)(x,x')
	=
	{\cal F}_{i}(M)(x) \otimes {\cal F}_i(N)(x')
	$
\item $
	{\cal F}_{i}(M \otimes N)_*(x,x')
	=
	{\cal F}_{i}(M)_*(x) \times {\cal F}_i(N)_*(x')
	$
\end{enumerate}

\end{theorem}
\begin{proof} Proceed by induction. When $i=-1$, we have:
\[
{\cal F}_{-1}(M \otimes N)(x,x') = \{{\bf 0}, M(x) \otimes N(x')\} = {\cal F}_{-1}(M)(x) \otimes {\cal F}_{-1}(N)(x')
\]
Suppose the claim holds for $i$. Let $\phi_{ab}\in Hom_{\cal C}(a,b)$ denote an arbitrary morphism. Then ${\cal F}_{i+1}(M)(x) \otimes {\cal F}_{i+1}(N)(x')$ is generated (via closure under intersections) by the following four sets of images and inverse images:
$$\{  M(\phi_{zx})(U_z) \otimes N(\phi_{z'x'})(W_{z'}) \}$$
$$\{  M(\phi_{zx})(U_z) \otimes N(\phi_{x'y'})\inv(W_{y'}) \}$$
$$\{  M(\phi_{xy})\inv(U_y) \otimes N(\phi_{z'x'})(W_{z'}) \}$$
$$\{  M(\phi_{xy})\inv(U_y) \otimes N(\phi_{x'y'})\inv(W_{y'}) \}$$
where $U_a \in {\cal F}_{i}(M)(a)$ and $W_b \in {\cal F}_{i}(N)(b)$. By the induction hypothesis, ${\cal F}_{i+1}(M \otimes N)(x, x')$ is generated in the same manner by the following two sets:
$$ \{ M(\phi_{zx})(U_z) \otimes N(\phi_{z'x'})(W_{z'})   \} $$
$$ \{ M(\phi_{xy})\inv(U_y) \otimes N(\phi_{x'y'})\inv(W_{y'})   \} $$
These two descriptions clearly imply ${\cal F}_{i+1}(M \otimes N)(x, x') \subseteq {\cal F}_{i+1}(M)(x) \otimes {\cal F}_{i+1}(N)(x')$. Inclusion in the other direction follows from the equalities
$$ M(\phi_{zx})(U_z) \otimes N(\phi_{x'y'})\inv(W_{y'}) = \left(M(\phi_{zx})(U_z)\otimes N(x')\right) \cap \left(M(x)\otimes N(\phi_{x'y'})\inv(W_{y'})\right)  $$
$$ M(\phi_{xy})\inv(U_y) \otimes N(\phi_{z'x'})(W_{z'}) = \left(M(\phi_{xy})\inv(U_y)\otimes N(x')\right)\cap \left(M(x)\otimes N(\phi_{z'x'})(W_{z'})\right)$$
Statement 2.~follows from 1.~by the previous lemma.
\end{proof}
\vskip.5in


\section{Coverings} We construct - for any $\cal C$-module $M$ - a {\it quasi-tame} module (defined below) which covers it in an appropriate sense. We also establish a sufficient condition for the quasi-tame covering to be b-tame (also defined below).
\vskip.3in


\subsection{The quasi-tame covering of a $\cal C$-module}
Let $M$ be a $\cal C$-module with stable local structure. 

\begin{definition} Fixing an object $x\in obj({\cal C})$ and an element $\{{\bf 0}\}\ne [W_{{\cal F}(M)(x)}]\in {\cal F}(M)_*(x)$ with unique representative $W_{{\cal F}(M)(x)}\in {\cal F}(M)(x)$, $Supp(W_{{\cal F}(M)(x)})$ is the smallest $\cal C$-module satisfying
\begin{itemize}
\item ${\cal F}(Supp(W_{{\cal F}(M)(x)}))_*$ is a sub-$\cal C$-set of ${\cal F}(M)_*$
\item $W_{{\cal F}(M)(x)}\subset Supp(W_{{\cal F}(M)(x)})(x)$
\end{itemize}
\end{definition}
\vskip.1in

Precisely, for each object $y\in obj({\cal C})$ and element $U_{{\cal F}(M)(y)}\in {\cal F}(M)(y)$, $U_{{\cal F}(M)(y)}\in {\cal F}(Supp(W_{{\cal F}(M)(x)}))(y)$ iff $U_{{\cal F}(M)(y)}$ is connected to $W_{{\cal F}(M)(x)}$ by a zig-zag sequence of isomorphisms in ${\cal F}(M)_*$ induced by restrictions to subquotients of morphisms in the $\cal C$-module $M$. We refer to a $\cal C$-module arising in this fashion (i.e., isomorphic to $Supp(W_{{\cal F}(M)(x)})$ for some $\{{\bf 0}\}\ne [W_{{\cal F}(M)(x)}]$) as a \emph{quasi-block}.
\vskip.1in

 Suppose $x\ne y\in obj(Supp(W_{{\cal F}(M)(x)}))$ and $U_{{\cal F}(M)(y)}$ is the unique representative of the equivalence class $[U_{{\cal F}(M)(y)}]$ in ${\cal F}(M)_*(y)$ connected by a zig-zag sequence of isomorphisms to $[W_{{\cal F}(M)(x)}]\in {\cal F}(M)_*(x)$. There is then a canonical identification of $\cal C$-modules
\begin{equation}
Supp(W_{{\cal F}(M)(x)})\cong Supp(U_{{\cal F}(M)(y)})
\end{equation}
When two $\cal C$-quasi-blocks are identified in this fashion (including the case $x=y$), we will refer to them as being {\it quasi-block-equivalent}, indicated by the symbol $\underset{B}{\sim}$.

\begin{definition} A $\cal C$-module is {\it quasi-tame} if it can be written as a finite direct sum of $\cal C$-quasi-blocks. The {\it quasi-tame cover} of $M$ is the $\cal C$-module
\begin{equation}\label{eqn:quasi-cover}
{\cal QTC}(M) := \left(\bigoplus_{x\in obj({\cal C})}\left(\bigoplus_{\{{\bf 0}\}\ne [W_{{\cal F}(M)(x)}]\in {\cal F}(M)_*(x)}\ Supp\left(W_{{\cal F}(M)(x)}\right)\right)\right){\Bigg/}\mathlarger{\mathlarger{\underset{B}{\sim}}}
\end{equation}
\end{definition}
\vskip.1in

\begin{theorem}\label{thm:5} There is a natural isomorphism $P(M)_*: {\cal F}({\cal QTC}(M))_*\xrightarrow{\cong} {\cal F}(M)_*$ of associated graded local structures given on elements by
\[
{\cal F}({\cal QTC}(M))_*(x)\ni [W_{{\cal F}(M)(x)}]\overset{P(M)_*}{\mathlarger{\longmapsto}} [W_{{\cal F}(M)(x)}]\in {\cal F}(M)_*(x)
\]

In addition, ${\cal QTC}(M)$ has stable local structure, and is in general position: $e({\cal QTC}(M)) = 0$. 
\end{theorem}

\begin{proof} The first statement follows directly from the construction of ${\cal QTC}(M)$. Any quasi-block associated to a $\cal C$-module with stable local structure will have stable local structure, and has vanishing excess (again, by construction). Therefore ${\cal QTC}(M)$, which is a finite direct sum of quasi-blocks, satisfies the same properties. 
\end{proof}
\vskip.3in


\subsection{Tame coverings} A $\cal C$-module $N$ is a {\it block} (referred to as a $\cal C$-block) if whenever $N(x)\ne \{{\bf 0}\}\ne N(y)$, there is a zig-zag sequence of isomorphisms in $N$ connecting $N(x)$ and $N(y)$. We say a $\cal C$-module is {\it b-tame} if it is a finite direct sum of blocks. If every $Supp(W_{{\cal F}(M)(x)})$ in (\ref{eqn:quasi-cover}) above is a block, the quasi-tame cover is b-tame, and referred to as the {\it b-tame cover}, written ${\cal TC}(M)$.

\begin{lemma}\label{lemma:7} Suppose $M$ is a $\cal C$-module such that for all $x\in obj({\cal C})$, $\{{\bf 0}\}\ne [W_{{\cal F}(M)(x)}]\in {\cal F}_*(M)_*(x)$ and zig-zag composition of isomorphisms ${\cal F}(M)(x)\ni W_{{\cal F}(M)(x)}\overset{\cong}{\leftrightarrow} U_{{\cal F}(M)(x)}\in{\cal F}(M)(x)$ (induced by morphisms in $M$), $W_{{\cal F}(M)(x)} = U_{{\cal F}(M)(x)}$. Then each quasi-block in the quasi-tame cover is a block, and so ${\cal QTC}(M) = {\cal TC}(M)$.
\end{lemma}

\begin{proof} The condition implies that $[W_{{\cal F}(M)(x)}]$ cannot be connected by a zig-zag sequence of isomorphisms to an element $[U_{{\cal F}(M)(x)}]\ne [W_{{\cal F}(M)(x)}]$, forcing $Supp(W_{{\cal F}(M)(x)})$ to satisfy the precise conditions needed to be a block. The result follows.
\end{proof}
\vskip.2in

It is currently unknown if the isomorphism $P_*(M)$ of associated graded local structures is always induced by a morphism of $\cal C$-modules ${\cal QTC}(M)\to M$ (although it is if $M$ is equipped with an $\cal IPC$-structure, as discussed below). Nevertheless, the following is clear

\begin{lemma}\label{lemma:8} If $M$ is a quasi-tame $\cal C$-module, then ${\cal QTC}(M) = M$, with the isomorphism $P(M)_*$ induced by this equality. Moreover if $M$ is b-tame, then ${\cal QTC}(M) = {\cal TC}(M)$.
\end{lemma}
\vskip.3in


\subsection{Coverings of $\cal IPC$-modules} We assume now that $M$ is an $\cal IPC$-module. In this case for each $x\in obj({\cal C})$ and $(W,SS_{{\cal F}(M)(x)}(W))\in {\cal F}(M)(x)^p$, the canonical isomorphism
\begin{equation}\label{eqn:embed}
(SS_{{\cal F}(M)(x)}(W)\subset W)^\perp\xrightarrow{\cong} W_{{\cal F}(M)(x)}
\end{equation}
provides an embedding $W_{{\cal F}(M)(x)}\hookrightarrow M(x)$ compatible with respect to taking direct and inverse images of morphisms of $M$ mapping from or to $M(x)$. The result, again for each $x\in obj({\cal C})$ and $[W_{{\cal F}(M)(x)}]\in {\cal F}(M)_*(x)$, is an extension of (\ref{eqn:embed}) to an embedding of $\cal C$-modules
\[
Supp(W_{{\cal F}(M)(x)})\hookrightarrow M
\]

\begin{theorem} For an $\cal IPC$-module $M$, the isomorphism $P(M)_*$ of associated graded local structures in Theorem \ref{thm:5} is induced by a surjection of $\cal C$-modules
\begin{equation}\label{eqn:proj}
P(M): {\cal QTC}(M)\surj M
\end{equation}
This surjection is an isomorphism (i.e., $M$ is quasi-tame) iff $e(M) = 0$.  Moreover, if in addition all of the quasi-block summands of ${\cal QTC}(M)$ are blocks, then $M$ is b-tame iff $e(M) = 0$.
\end{theorem}

\begin{proof} Choosing a representative for each $B$-equivalence class and summing these inclusions as in (\ref{eqn:quasi-cover}) produces a the required surjection $P(M)$ of $\cal C$-modules by the above. The total dimension of the kernel $ker(P(M))$ is exactly the object excess $e(M)$, implying the second statement. The final statement follows from the results of the previous section.
\end{proof}
\vskip.3in


\subsection{Decomposition of 1-dimensional persistence modules via local structure} We adopt the following (slightly) restrictive definition.

\begin{definition} A \emph{finite 1-dimensional persistence module over $k$} is a functor $M:\underline{\rm n}\cong{\cal C}\to (vect/k)$ where $\underline{\rm n}$ denotes the categorical realization of the totally ordered set $1 < 2 < 3<\dots < n$. 
\end{definition}
We will need some additional terminology.
\begin{definition} Given $1\le i<j\le n$, the \emph{interval [i,j]} is the full subcategory of $\underline{\rm n}$ on objects $\{k\in \mathbb N\ |\ i\le k\le n\}$ (using this notation, $\underline{\rm n}$ can alternatively be written as $[1,n]$). If $M$ is an $\underline{\rm n}$-module, and $1\le i < j\le n$, $M$ is an \emph{[i,k]-block} if it is a block, and $M(j)\ne \{0\}$ iff $i\le j\le k$. M is an \emph{interval module of type [i,k]} if it is an $[i,k]$-block and $dim_k(M(j))\le 1$ for all $1\le j\le n$.
\end{definition}

\vskip.1in
It is a consequence of Gabriel's Theorem \cite{pg} that persistence modules decompose as a direct sum of interval submodules and that the decomposition is unique up to reordering.
\vskip.2in
In this section we give an alternative proof of this result using local structure. 

\begin{lemma} Any persistence module $M:\underline{n}\to (vect/k)$ admits an inner product structure, hence can be realized as an ${\cal I}\underline{n}$-module.
\end{lemma}
\begin{proof} The first statement is true trivially for $n=1$. For $1\le i\le j\le n$ denote the unique morphism from $M(i)$ to $M(j)$ by $\phi_{i,j}$. By induction, we may assume that we have fixed an inner product structure on the sub-module $M' := \{M(2)\xrightarrow{\phi_{1,2}} M(3)\to\dots\xrightarrow{\phi_{(n-1),n}} M(n)\}$.  For $1 < j\le n$ let $\ker(i,j) := \ker(\phi_{i,j})$. This yields the semiflag
\[
\ker(1,2)\subseteq \ker(1,3)\subseteq\dots\subseteq \ker(1,n)\subseteq M(1)
\]
Next, choose a direct sum decomposition
\[
M(1)\cong \ker(1,2)\oplus \ker(1,3)/\ker(1,2)\oplus \ker(1,4)/\ker(1,3)\oplus\dots\oplus \ker(1,n)/\ker(1,n-1)\oplus M(1)/\ker(1,n)
\]
We observe that $\ker(1,j)/\ker(1,j-1)$ maps isomorphically to its image in $M(j-1)$ under $\phi_{1,(j-1)}$, and then to $\{{\bf 0}\}\subset M(j)$ under $\phi_{(j-1),j}$. On this summand choose the inner product to be that induced by the one on $M(j-1)$ via the embedding $\phi_{1,(j-1)}$; similarly equip $M(1)/\ker(1,n)$ with the one induced by its embedding via $\phi_{1,n}$ into $M(n)$. Choose an arbitrary inner product on $\ker(1,2)$. This collection of inner products extends to a unique inner product on $M(1)$ which agrees with the previously defined inner product on each summand, and where the summands themelves are mutually orthogonal. By construction, each $\phi_{1,j}$ will be a partial isometry, yielding the requisite extension of the $\cal IP$-structure on $M'$ to $M$.
\end{proof}
\vskip.2in

\begin{lemma}\label{lemma:10} If a finite 1-dim persistence module $M:\underline{n}\to (vect/k)$ is a quasi-block, it is a block. Moreover, if $M$ is a block, then $supp(M) = [k,l]$ for some $1\le k\le l\le n$.
\end{lemma}

\begin{proof} Associate the symbol $x_i$ with the morphism $\phi_{i,i+1}, 1\le i\le n-1$, and let $F_{n-1}$ be the free group on $x_1,\dots,x_{n-1}$. Via this association, we see that a zig-zag sequence of isomorphisms beginning and ending at $M(k)$ corresponds in $F_{n-1}$ to a word $w$ which in reduced form equals $e$ ($e$ denoting the identity element). This implies the self-map $M(i)\xrightarrow{\cong} M(i)$ must be the identity, and hence the quasi-block is a block. The support of this block must be a connected subcategory of $\underline{n}$, implying it is an interval subcategory of the form $[k,l]$ as claimed.
\end{proof}
\vskip.2in

For convenience we will abbrieviate $\im(\phi_{i,j})$ as $\im(i,j)$. A fact we will exploit extensively in the following discussion is that at each object the semiflag of images and kernels are each nested: 
\[
\im(0,k) \subseteq \cdots \subseteq \im(k,k)
\]
\[
\ker(k,k) \subseteq \cdots \subseteq \ker(k,n)
\]

Let 
$$
	{\cal I}(k) :=
\left\{
	\bigcap_{l=1}^m 
	\left[ 
		\im(i_l,k) + \ker(k,j_l)
	\right]
	\ | \
	m \in \mathbb N, \ 
	i_l \in [0,k], \
	j_l \in [k,n+1]
\right\}
$$
$$
	{\cal S}(k) := 
	\left\{
	\sum_{l=1}^m 
	\left[ 
		\im(i_l,k) \cap \ker(k,j_l)
	\right]
	\ | \
	m \in \mathbb N, \ 
	i_l \in [0,k], \
	j_l \in [k,n+1]
\right\}
$$	

By construction, ${\cal S}(k)$ is closed and ${\cal I}(k)$ is inverse closed. Since the semiflags of kernels and images are nested, each element of ${\cal I}(k)$ and ${\cal S}(k)$ can be written in multiple ways. We will want a way for eliminating obviously redundant subspaces. For $i\le j\in\mathbb N$,  $S[i,j]$ will denote the set of integers $i\le k\le j$ equipped with its standard total ordering. This induces the usual partial ordering on the Cartesian product $S[i,j]\times S[i',j']$. We recall that an \emph{anti-chain} in a poset is a subset in which each distinct pair of elements are unrelated (incomparable).
\vskip.2in

 We now consider the following subcollections:

\begin{align*}
	{\cal I}_r(k) :=
\Bigg\{
	\bigcap_{l=1}^m 
	\left[ 
		\im(i_l,k) + \ker(k,j_l)
	\right]
	\ \vrule \ 
	& m \in \mathbb N, \ 
	(i_l,j_l) \in [0,k] \times [k,n+1], \\
	& \{(i_l,j_l)\}_{l=1}^m \textnormal{ is an antichain.}
\Bigg\}
\end{align*}
\begin{align*}
	{\cal S}_r(k) :=
\Bigg\{
	\sum_{l=1}^m 
	\left[ 
		\im(i_l,k) \cap \ker(k,j_l)
	\right]
	\ \vrule \ 
	& m \in \mathbb N, \ 
	(i_l,j_l) \in [0,k] \times [k,n+1], \\
	& \{(i_l,j_l)\}_{l=1}^m \textnormal{ is an antichain.}
\Bigg\}
\end{align*}

\begin{lemma}
Let $V$ be a vector space and 
\[
A_1 \subseteq A_2 \subseteq \cdots \subseteq A_m \subseteq X
\]  
\[
X \supseteq B_1 \supseteq B_2  \supseteq \cdots  \supseteq B_m
\]
a pair of semi-flags in $V$, with $m\ge 2$. Then we can interchange unions and intersections in the following shifted manner:
\begin{equation}
	\sum_{i=1}^m A_i \cap B_i = B_1 \cap \left( \bigcap_{i=1}^{m-1} (A_i  + B_{i+1}) \right) \cap A_m
\end{equation}
\end{lemma}

\begin{proof} For $m=2$ the equality reads
\[
A_1\cap B_1 + A_2\cap B_2 = B_1\cap\left(A_1+B_2\right)\cap A_2
\]
Both $A_1\cap B_1$ and $A_2\cap B_2$ are contained in $B_1\cap\left(A_1+B_2\right)\cap A_2$, and so their sum is as well. For inclusion in the other direction, we note that an element $v$ of the 3-fold intersection appearing on the right can be written in three different ways:
\[
v = b_1 = a_1+b_2 = a_2
\]
for elements $a_i\in A_i, b_i\in B_i$. Then $(b_1-b_2) = a_1\in A_1\Rightarrow (b_1-b_2)\in A_1\cap B_1$ and $b_2 = (a_2-a_1)\in A_2\Rightarrow b_2\in A_2\cap B_2$, yielding $v = (b_1-b_2) + b_2\in A_1\cap B_1 + A_2\cap B_2$.
\vskip.1in

Assume $m>2$. By induction on $m$ we have
\[
\sum_{i=1}^m A_i\cap B_i = \left(\sum_{i=1}^{m-1} A_i\cap B_i\right) + A_m\cap B_m
= B_1\cap\left(\bigcap_{i=1}^{m-2} (A_i+B_{i+1})\right)\cap A_{m-1} + A_m\cap B_m
\]
Set $W_1 := B_1\cap\left(\bigcap_{i=1}^{m-2} (A_i+B_{i+1})\right)\cap A_{m-1}$, $W_2 := A_m\cap B_m$, and 
$W_3 := B_1 \cap \left( \bigcap_{i=1}^{m-1} (A_i  + B_{i+1}) \right) \cap A_m$. There are evident inclusions $W_1\subseteq W_3, W_2\subseteq W_3$, implying $W_1 + W_2\subseteq W_3$. For the other direction, we see (as above) that an element $v\in W_3$ can be expressed in $(m+1)$ different ways
\begin{equation}\label{eqn:11}
W_3\ni v = b_1 = a_1+b_2 = a_2+ b_3 =\dots = a_{m-1}+b_m = a_m,\quad a_i\in A_i, b_j\in B_j
\end{equation}
Let $v_1 = b_1 - b_m$. From (\ref{eqn:11}) we have
\[
v_1 = (b_1-b_m) = a_1 + (b_2-b_m) = a_2 + (b_3-b_m) =\dots = a_{m-1} \Rightarrow v_1\in B_1\cap\left(\bigcap_{i=1}^{m-2} (A_i+B_{i+1})\right)\cap A_{m-1}
\]
and 
\[
b_m = a_m-a_{m-1}\in A_m \Rightarrow b_m\in A_m\cap B_m
\]
yielding $v = v_1 + b_m\in W_1 + W_2$.
\end{proof}

\begin{lemma}
	${\cal S}(k) = {\cal S}_r(k) = {\cal I}_r(k) = {\cal I}(k)$
\end{lemma}
\begin{proof}
The middle equality follows by applying the previous lemma. 
One inclusion of the two outer equalities is obvious. The other follows by reducing non-antichains to antichains.
\end{proof}

\begin{proposition}
\label{prop:stable-local-struc-of-pers}
	For all $k \in [1,n]$, the local structure of $M$ stabilizes after $1$ stage, and is ${\cal I}(k) = {\cal S}(k)$. i.e.
	$${\cal F}_1(M)(k) = {\cal I}(k) = {\cal F}(M)(k)$$
	Hence $e(M)=0$.
\end{proposition}
\begin{proof}
Since $
	{\cal F}_0(M)(k) = 
	\left\{
		\im(i,k) \cap \ker(k,j) \ \vrule \
		0 \leq i \leq k \leq j \leq n+1
	\right\},
	$ the first equality holds by applying LS1. The second equality holds since ${\cal I}(k) = {\cal S}(k)$ is biclosed.
	By Lemma \ref{lemma:2} the multiflag generated by the semiflags of images and of kernels is in general position, and therefore so is its sum-intersection closure (Lemma \ref{lemma:generalposition}). The multiflag ${\cal I}(k)$ is a subflag of the sum-intersection closure, hence is in general position for each $k$. We conclude that $e(M) = 0$.
\end{proof}

While $0$ and $n+1$ are not objects in \ul{$n$}, we unify our notation by taking $M(0)=M(n+1)=\{\bf 0\}$ and  $\phi_{0,1}:{\{\bf 0\}} \lra{}{V_1}$ and $\phi_{n,n+1}:V_n \lra{}\{\bf 0\}$ to be zero maps. Note that $\ker(k,k) = 0$ and $\ker(k,n+1) = M(k)$. Let $\im(i,k) = \im(\phi_{i,k})$. Then $\im(k,k) = M(k)$ and $\im(0,k) = \{\bf 0\}$.
\vskip.1in

For $k \in [i,j]$, define the subquotient
\addtolength{\jot}{1em}
\begin{align*}
	\bar A_k &= \frac{N_k}{D_k} = \frac{\ker(k,j+1) \cap \im(i,k)}{\ker(k,j) \cap \im(i,k) \ + \ \im(i-1,k)\cap \ker(k,j+1)}
\end{align*}

\begin{lemma}
For $i\le l < j$, $\bar \phi_{l,l+1}:\bar A_l \lra{} \bar A_{l+1}$ is an isomorphism, and $\bar \phi_{j,j+1}(\bar A_j) = 0$
\end{lemma}
\begin{proof}
 If $i\le l < j$, $\phi_{l,l+1}$ maps $N_l$ surjectively to  $N_{l+1}$, implying $\bar \phi_{l,l+1}$ is surjective. In addition, $ker\left(\phi_{l,l+1}|_{N_k}\right) = ker(k,k+1)\cap im(i,k)\subset D_k$, so $\bar \phi_{l,l+1}$ is injective. Finally $\phi_{j,j+1}$ maps $N_j$ to zero, implying the second statement.
\end{proof}

As $M$ is equipped with an inner product, $A_k := (D_k \subset N_k)^\perp \cong \bar A_k$, and so
\begin{equation}\label{eqn:12}
M[i,j] := 0 \lra{} \cdots \lra{} 0 \lra{} A_i\xrightarrow[\cong]{\phi_{i,i+1}}\cdots \xrightarrow[\cong]{\phi_{j-1,j}} A_j \lra{} 0 \lra{}\cdots\lra{} 0
\end{equation}
is a submodule of $M$ and also a block supported on $[i,j]$.

\begin{theorem}\label{thm:decomp} Let $M : \underline{n} \lra{} (vect/k)$ be a finite persistence module. Then $M$ decomposes uniquely (up to reordering) as a direct sum of blocks:
\[
M=\bigoplus_{[i,j]\subseteq [1,n]} M[i,j]
\]
\end{theorem}

\begin{proof} It is clear that the support of any block must be an interval submodule of $[1,n]$, and moreover that any interval submodule can occur as the support of a block in $M$. By Proposition \ref{prop:stable-local-struc-of-pers}, $M$ is an $\cal IPC$-module with no object-excess; it follows by the results of the previous section and Lemma \ref{lemma:10} that $M$ is the direct sum of its blocks. It is straightforward to show that for each $1\le i\le j\le n$ the $[i,j]$-block appearing in the associated graded of ${\cal F}(M) = {\cal F}_1(M)$ is precisely $M[i,j]$, as defined in (\ref{eqn:12}). The decomposition is unique up to reordering, as it is completely determined by the local structure on $M$.
\end{proof}

It is additionally worth noting that this decomposition is \emph{basis-free}, arising directly from the local structure. To get the traditional decomposition of $M$ as a direct sum of interval modules, one needs to further express each $M[i,j]$ as a direct sum of interval modules of type $[i,j]$. This is a simple exercise which however does require a choice of basis (although the number of terms in the direct sum does not).

\vskip.3in

\subsection{Modules with stable local structure} One-dimensional persistence modules generalize naturally to higher dimensions.

\begin{definition} A \emph{finite $n$-dimensional persistence category $\cal C$} is a finite category $\cal C$ for which there exists an isomorphism of categories
\[
{\rm \underline{m_1}}\times{\rm \underline{m_2}}\times\dots{\rm \underline{m_n}}\cong{\cal C}
\]
for some set of positive integers $m_1,m_2,\dots,m_n$. An $n$-dimensional persistence category $\cal C$ has \emph{multi-dimension} $(m_1,m_2,\dots,m_n)$ if $\cal C$ is isomorphic to $\underline{m_1}\times\underline{m_2}\times\dots\times\underline{m_n}$; note that this $n$-tuple is a well-defined invariant of the isomorphism class of $\cal C$ up to reordering. An \emph{$n$-dimensional persistence module} is a $\cal C$-module for some finite $n$-dimensional persistence category $\cal C$.
\end{definition}

The proof of the next theorem illustrates the usefulness of strong stability.

\begin{theorem}\label{thm:6} Finite $n$-dimensional persistence modules have strongly stable local structure for all $n\ge 0$.
\end{theorem}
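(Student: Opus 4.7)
I would argue by induction on $n$, with the genuine base case being $n=1$. For a finite $1$-dimensional zig-zag module $M:\underline{zm}\to (vect/k)$, the underlying graph $\Gamma(\underline{zm})$ is a finite path, so one iteration of LS2 propagates data across exactly one edge. Fixing a finite initial multi-flag $F$, after at most $m-1$ iterations every vertex has received contributions from every other vertex. At that stage I would invoke the round-trip identities $\phi(\phi^{-1}(W)) = W\cap im(\phi)$ and $\phi^{-1}(\phi(W)) = W+\ker(\phi)$, together with the observation that $im(\phi)$ and $\ker(\phi)$ are already generated in LS1, to show that any further iteration produces only subspaces in the finite sublattice of subspaces of $M(x)$ generated by $F(x)$, the relevant kernels, the propagated images, and a bounded number of ``$+\ker$'' adjustments. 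Strong stability for $n=1$ follows because that sublattice is finite.

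For the inductive step $n\ge 2$, I would view ${\cal C} := \underline{zm_1}\times\cdots\times\underline{zm_n}$ as a $1$-dimensional zig-zag (in the $n$-th coordinate) of the $(n-1)$-dimensional zig-zag categories ${\cal C}_j := \underline{zm_1}\times\cdots\times\underline{zm_{n-1}}\times\{j\}$, $1\le j\le m_n$. Let $M_j$ denote the restriction of $M$ to ${\cal C}_j$ and, for a finite initial multi-flag $F$ on $M$, let $F_j$ denote its restriction to $M_j$. The computation of ${\cal F}^F(M)$ then alternates two kinds of steps: (a) intra-slice LS1/LS2 operations lying entirely inside a single ${\cal C}_j$, which by the inductive hypothesis stabilize each restriction after finitely many iterations; and (b) inter-slice propagation along the $n$-th-coordinate morphisms, which transports a finite multi-flag on $M_j$ to a finite collection of new subspaces in $M_{j\pm 1}$.

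After one round of (b), each slice carries an enlarged but still finite multi-flag, and I would invoke strong stability in dimension $n-1$ to restabilize intra-slice. Iterating this ``propagate-then-stabilize'' cycle, the base-case argument applied transversally to the $n$-th coordinate shows that after at most $m_n$ rounds the wavefront of new data has visited every slice and that subsequent rounds reproduce only previously computed subspaces via the same round-trip identities. Combining the intra-slice and inter-slice stabilization bounds yields an explicit $N = N(F)$ after which ${\cal F}^F_N(M) = {\cal F}^F(M)$ globally.

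The principal obstacle is exactly step (b): \emph{a priori}, inter-slice propagation might keep generating genuinely new subspaces at each pass. This is precisely why the inductive hypothesis must be \emph{strong} stability rather than plain stability — each propagation round introduces a fresh finite multi-flag on an adjacent slice that was not part of the original $F_{j\pm 1}$, and the stability guarantee must apply uniformly to all such enlargements, not merely to the restrictions of $F$. With this strengthening available, the argument reduces to finitely many applications of the inductive hypothesis controlled by the transverse $1$-dimensional zig-zag, closing the induction.
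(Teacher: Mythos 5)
Your overall strategy — induction on $n$, slicing the $n$-th coordinate into $(n-1)$-dimensional layers, and insisting on \emph{strong} rather than plain stability as the inductive hypothesis so that it can be applied to the enlarged multi-flags generated by inter-slice propagation — is close in spirit to the paper's argument, and the observation about why strong stability is the right inductive hypothesis is exactly the paper's reason for introducing the notion. The decomposition differs, though: the paper runs a nested induction on $m_n$ as well, splitting $\cal C$ into just \emph{two} pieces, the terminal slice $M[m_n]$ and the remainder $M[1,m_n-1]$, joined by the single composite $\phi_\bullet$. It then pulls back and pushes forward the \emph{already stabilized} local structures ${\cal F}_2$ and ${\cal F}_1$ exactly once to form ${\cal F}_1^*=\phi_\bullet^{-1}[{\cal F}_2]$ and ${\cal F}_2^*=\phi_\bullet({\cal F}_1)$, restabilizes each piece relative to these finite enlargements using strong stability, and asserts that the resulting pair is the global local structure. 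You instead iterate a ``propagate-then-stabilize'' cycle across all $m_n$ slices.

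The gap is in the termination of that cycle. You claim that after $m_n$ rounds ``subsequent rounds reproduce only previously computed subspaces via the same round-trip identities,'' but the identities $\phi(\phi^{-1}W)=W\cap \mathrm{im}\,\phi$ and $\phi^{-1}(\phi W)=W+\ker\phi$ control a single out-and-back trip through one map with nothing happening in between. In your scheme the intermediate stop is a full intra-slice restabilization, which produces subspaces — intersections with slice-intrinsic kernels and images, further pushed and pulled within the slice — that are not of the form $\phi(W)$ or $\phi^{-1}(W)$ for any previously propagated $W$, so the subspace that returns on the next pass is not a round-trip of anything earlier and the identities yield no bound. Strong stability guarantees that each \emph{individual} restabilization terminates, but it places no bound on the \emph{number} of inter-slice rounds. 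You acknowledge this (``inter-slice propagation might keep generating genuinely new subspaces at each pass''), but the closing sentence — that ``the argument reduces to finitely many applications of the inductive hypothesis controlled by the transverse $1$-dimensional zig-zag'' — is the assertion that needs proving, not a consequence of what precedes it. The paper's two-piece split and one-shot push/pull of the fully stabilized ${\cal F}_1,{\cal F}_2$ is precisely the device used to avoid having to bound such an iteration; to repair your version you would need a separate argument that the interface multi-flags between adjacent slices eventually stop growing. (Your base case $n=1$ has a milder version of the same issue: the ``$+\ker$'' adjustments are sums, not intersections, so finiteness of the generated sublattice also needs a word of justification, though for a path graph this is much easier.)
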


\begin{proof} We may assume, without loss of generaality, that $M$ is a $\cal C$-module for an $n$-dimensional persistence category $\cal C$ with multi-dimension $(m_1,m_2,\dots,m_n)$ where the dimensions $m_i$ have been arranged in non-increasing order. We assume the objects of ${\cal C}$ have been labeled with multi-indices $(i_1,i_2,\dots,i_n), 1\le i_j\le m_j$, so that a morphism  exists in ${\cal C}$ from $(i_1,i_2,\dots,i_n)$ to $(j_1,j_2,\dots,j_n)$  iff $i_k\le j_k, 1\le k\le n$ (as $\cal C$ is a poset category, if such a morphism exists it is unique). We will reference the objects of $\cal C$ by their multi-indices. The proof is by induction on dimension; the base case $n=0$ is trivially true as there is nothing to prove.
\vskip.1in
Assume then that $n\ge 1$. For $1\le i\le j\le m_n$, let ${\cal C}[i,j]$ denote the full subcategory of $\cal C$ on objects $(k_1,k_2,\dots,k_n)$ with $i\le k_n\le j$, and let $M[i,j]$ denote the restriction of $M$ to ${\cal C}[i,j]$. Let ${\cal F}_1$ resp.~${\cal F}_2$ denote the local structures on $M[1,m_n-1]$ and $M[m_n]$ respectively; by induction on the cardinality of $m_n$ we may assume these local structures are stable with stabilization indices $N_1,N_2$. Let $\phi_i:M[i]\to m[i+1]$ be the structure map from level $i$ to level $(i+1)$ in the $n$th coordinate. Then define $\phi_\bullet : M[1,m_n-1]\to M[m_n]$ be the morphism of $n$-dimensional persistence modules which on $M[i]$ is given by the composition
\[
M[i]\xrightarrow{\phi_i} M[i+1]\xrightarrow{\phi_{i+1}}\dots M[m_n-1]\xrightarrow{\phi_{m_n-1}} M[m_n]
\]
Define a multi-flag on $M[1,m_n-1]$ by ${\cal F}_1^* := \phi_\bullet^{-1}[{\cal F}_2]$ and on $M[m_n]$ by ${\cal F}_2^* := \phi_\bullet ({\cal F}_1)$. By induction on length and dimension we may assume that $M[1,m_n-1]$ and $M[m_n]$ have local structures which stabilize strongly (we note that $M[m_n]$ is effectively an $(n-1)$-dimensional persistence module). As these multi-flags are finite, we have that
\begin{itemize}
\item the restricted local structures ${\cal F}_i$ are stable (noted above);
\item the local structure of $M[1,m_n-1]$ is stable relative to ${\cal F}_1^*$;
\item the local structure of $M[m_n]$ is stable relative to ${\cal F}_2^*$.
\end{itemize}
We may then choose $N$ so that in each of the three itemized cases, stabilization has been achieved by the $N^{th}$ stage. Let $\cal G$ be the multi-flag on $M$ which on $M[1,m_n-1]$ is the local structure relative to ${\cal F}_1^*$ and on $M[m_n]$ is the local structure relative to ${\cal F}_2^*$. Then $\cal G$ is the local structure on $M$, and has been achieved after at most $2N$ stages starting with the trivial semi-flag on $M$. This implies $M$ has stable local structure. To verify the induction step for the statement that $M$ has srongly stable local structure, let $F$ be a finite multi-flag on $M$. Let $F_1$ be its restriction to $M[1,m_n-1]$, and $F_2$ its restriction to $M[m_n]$. Then let ${\cal F}_i^{**}$ denote the multi-flag generated by ${\cal F}_i^*$ and $F_i$. Proceeding with the same argument as before yields a multi-flag ${\cal G}^*$ achieved at some finite stage which represents the local structure of $M$ relative to $F$, completing the induction step for persistence modules.
\end{proof}
\vskip.2in

The above discussion is independent of the base field; in this completely general case it is possible that the local structure of an arbitrary $\cal C$-module fails to be stable. However, if the base field $k$ is finite, and $\cal C$ is a finite category, then the finiteness of $\cal C$ together with the finite dimensionality of a $\cal C$-module $M$ at each vertex implies that any $\cal C$-module $M$ over $k$ is a finite set. In this case, the infinite refinement of ${\cal F}(M)$ that must occur in order to prevent stabilization at some finite stage is no longer possible. Hence

\begin{theorem} Assume the base field $k$ is finite. Then for all finite categories $\cal C$ and $\cal C$-modules $M$, $M$ has stable local structure.
\end{theorem}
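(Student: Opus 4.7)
The plan is to reduce the statement to a straightforward finiteness argument. First I would observe that when $k$ is finite and $V$ is a finite-dimensional $k$-vector space, $V$ itself has only $|k|^{\dim V}$ elements, so the collection $Sub(V)$ of linear subspaces of $V$ is a finite set. Applying this to each $M(x)$, the set of multi-flags of $M(x)$, being a subset of the (finite) power set $2^{Sub(M(x))}$, is itself finite.

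Next I would verify directly from the recursive definition (LS1)--(LS3) that the system $\{{\cal F}_n(M)(x)\}_{n\ge 0}$ is monotone: every generator of ${\cal F}_n(M)(x)$ reappears among the generators defining ${\cal F}_{n+1}(M)(x)$ (either via the identity morphism in $S_1(x)\cap S_2(x)$, or simply by the fact that the generating set at stage $n+1$ includes the preimages and images of elements of stage-$n$ multi-flags at adjacent objects, which by induction contain the stage-$n-1$ data pushed forward/pulled back, and therefore contain ${\cal F}_n(M)(x)$ itself at $x$). Hence at each $x$ we get an ascending chain ${\cal F}_0(M)(x)\subseteq {\cal F}_1(M)(x)\subseteq\cdots$ inside the finite set of multi-flags of $M(x)$.

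An ascending chain in a finite poset must stabilize, so for each $x\in obj({\cal C})$ there exists $N_x$ with ${\cal F}_{n}(M)(x)={\cal F}_{N_x}(M)(x)$ for all $n\ge N_x$, establishing local stability at $x$. Since $\cal C$ has finitely many objects, I can take $N:=\max_{x\in obj({\cal C})} N_x$ and conclude that the local structure of $M$ stabilizes uniformly by stage $N$, proving stability in the sense of the definition given earlier in the paper.

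There is no real obstacle here; the argument is essentially a pigeonhole/DCC argument on a finite lattice. The only point deserving care is the monotonicity verification, i.e.\ confirming that the generating rules in (LS2.1)--(LS2.2) really do produce an increasing, and not merely eventually-equal-up-to-closure, sequence of multi-flags; this follows because including $Id_x$ in both $S_1(x)$ and $S_2(x)$ forces every element of ${\cal F}_n(M)(x)$ to appear as $Id_x^{-1}[W]=Id_x[W]=W$ among the generators at stage $n+1$.
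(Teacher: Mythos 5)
Your argument is correct and follows the same route as the paper's, which presents the result as a near-immediate observation: over a finite field each $M(x)$ is a finite set, so each $M(x)$ has only finitely many subspaces and hence only finitely many possible multi-flags, which makes infinite refinement impossible. You've simply supplied the details the paper leaves implicit --- the monotonicity of the chain $\{{\cal F}_n(M)(x)\}_n$ via the presence of $Id_x$ in both $S_1(x)$ and $S_2(x)$, the ascending-chain/pigeonhole step in the finite lattice of multi-flags, and the passage from local stability to stability by taking the maximum of $N_x$ over the finitely many objects of $\cal C$ --- all of which are sound and worth recording explicitly.
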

\vskip.5in


\section{Topologically based $\cal C$-modules} A $\cal C$-module $M$ is said to be  {\it topologically based} if $M = H_*(F;k)$ (for either a particular value of $*$ in the ungraded case, or more generally viewed as a graded element of $(vect/k)$), where $F:{\cal C}\to {\cal D}$ is a functor from $\cal C$ to a category $\cal D$, equalling either
\begin{itemize}
\item {\bf f-s-sets} - the category of simplicial sets with finite skeleta and morphisms of simplicial sets, or
\item {\bf f-s-com} - the category of finite simplicial complexes and morphisms of simplicial complexes.
\end{itemize}

In what follows we will restrict ourselves to the category {\bf f-s-sets}, as it is slightly easier to work in (although all results carry over to {\bf f-s-complexes}). We show that any topologically-based module indexed on a poset category $\cal C$ admits a presentation by $\cal IPC$-modules (hence a presentation by $\cal FIPC$-modules by our results above). We also prove a general K\"unneth Theorem for topologically based $\cal C$-modules without restrictions on $\cal C$ (answering a question posed by G.~Carlsson).
\vskip.2in

\subsection{An ${\cal IPC}$-presentation} For this subsection we assume $\cal C$ to be a connected, finite poset-category, so that all $\cal C$-modules are finite poset-modules. 
\vskip.2in

We first show that any $\cal C$-diagram in {\bf f-s-sets} can be cofibrantly replaced, up to weak homotopical transformation. Precisely,

\begin{theorem} If $F:{\cal C}\to$ {\bf f-s-sets}, then there is a $\cal C$-diagram $\wt{F}:{\cal C}\to$ {\bf f-s-sets} and a natural transformation $\eta:\wt{F}\xrightarrow{\simeq} F$ which is a weak equivalence at each object, where $\wt{F}(\phi_{xy})$ is a closed cofibration (inclusion of simplicial sets) for all morphisms $\phi_{xy}$\footnote{The proof following is a minor elaboration of an argument communicated to us by Bill Dwyer \cite{bd}.}.
\end{theorem}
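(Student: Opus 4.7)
The plan is to construct $\wt{F}$ by induction along a linear extension of the partial order on $\mathrm{obj}({\cal C})$, at each stage replacing the natural comparison map by a closed inclusion via a mapping cylinder. What emerges is essentially a Reedy cofibrant replacement of $F$, exploiting the fact that a finite poset category is a direct Reedy category (in which degree is given by any linear extension).

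Fix an enumeration $x_1, x_2, \ldots, x_N$ of $\mathrm{obj}({\cal C})$ with $x_i < x_j$ implying $i < j$. Set $\wt{F}(x_1) := F(x_1)$ and $\eta_{x_1} := \mathrm{id}$. Suppose $\wt{F}$ and the pertinent components of $\eta$ have been defined on $\{x_1,\ldots,x_{k-1}\}$, and assume inductively that every structure map $\wt{F}(x_i) \to \wt{F}(x_j)$ produced so far is a closed inclusion. Let ${\cal C}_{<x_k}$ denote the full subcategory of $\cal C$ on objects strictly below $x_k$, and form the latching object
\[
L_k := \mathrm{colim}_{y \in {\cal C}_{<x_k}} \wt{F}(y).
\]
The maps $F(\phi_{y x_k}) \circ \eta_y : \wt{F}(y) \to F(x_k)$, for $y < x_k$, are compatible by the naturality of the partially constructed $\eta$ and so assemble into a canonical map $\ell_k: L_k \to F(x_k)$. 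Define $\wt{F}(x_k)$ as the mapping cylinder
\[
\wt{F}(x_k) := (L_k \times \Delta^1) \sqcup_{L_k \times \{1\}} F(x_k),
\]
equipped with its closed inclusion $i_k: L_k = L_k \times \{0\} \hookrightarrow \wt{F}(x_k)$ and deformation retraction $r_k: \wt{F}(x_k) \to F(x_k)$. Set $\eta_{x_k} := r_k$, and for each $\phi_{y x_k}$ take $\wt{F}(\phi_{y x_k})$ to be the composite $\wt{F}(y) \to L_k \xrightarrow{i_k} \wt{F}(x_k)$. Functoriality follows from the universal property of $L_k$, naturality of $\eta$ from the identity $r_k \circ i_k = \ell_k$, and the weak equivalence property of $\eta_{x_k}$ from the standard deformation retraction of a mapping cylinder onto its codomain.

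The main obstacle is propagating the inductive hypothesis: we must show each new structure map $\wt{F}(y) \to \wt{F}(x_k)$ is a closed inclusion. Since $i_k$ visibly is one, it suffices to show that the canonical map $\wt{F}(y) \to L_k$ is a closed inclusion for each $y \in {\cal C}_{<x_k}$. I would extract this as a separate lemma: \emph{for a finite poset $P$ and a $P$-diagram $G$ in \textbf{f-s-sets} all of whose structure maps and latching maps are closed inclusions, every canonical map $G(y) \to \mathrm{colim}_P G$ is a closed inclusion}. This is proved by induction on $|\mathrm{obj}(P)|$: pick a maximal $z \in P$, write $\mathrm{colim}_P G$ as the pushout of $\mathrm{colim}_{P \setminus \{z\}} G \leftarrow L_z G \to G(z)$, and use that closed inclusions of simplicial sets are stable under pushout. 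Both hypotheses of the lemma hold at each stage of our main induction, since the prior steps deliver closed-inclusion structure maps and the latching maps are closed inclusions by construction (each is itself the $0$-end of a mapping cylinder). Running the construction to $k = N$ produces the desired $\wt{F}$ and $\eta$.
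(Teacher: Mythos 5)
Your construction is essentially the same as the paper's: both induct along a linear extension of the partial order and at each step define $\wt{F}(x_k)$ as the mapping cylinder of the canonical map from the colimit over objects preceding $x_k$ (the latching object) into $F(x_k)$. The paper indexes its colimit over the slice category ${\cal C}/x_{m+1}$, which for a poset category coincides with your full subcategory ${\cal C}_{<x_k}$, so this is not a substantive difference.

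Where you improve on the paper is on the one non-formal point. The paper asserts that because all structure maps of $\wt{F}_m\circ P_m$ are simplicial inclusions, the maps from each $\wt{F}_m(x)$ into the colimit are inclusions; this implication is false in general without also assuming that the latching maps are cofibrations (consider two incomparable objects mapping to the same vertex of a higher one). Your lemma explicitly builds in the latching hypothesis, which is the correct Reedy-theoretic formulation, and the paper's construction does in fact satisfy it (each latching map is the $0$-end of a mapping cylinder), so the paper's conclusion is right even though the stated reason is incomplete.

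That said, the sketched proof of your lemma has a gap at the case $y = z$. From the pushout description $\mathrm{colim}_P G = \mathrm{colim}_{P\setminus\{z\}} G \sqcup_{L_z} G(z)$, the map $\mathrm{colim}_{P\setminus\{z\}} G \to \mathrm{colim}_P G$ is a closed inclusion (it is the pushout of the latching cofibration $L_z \hookrightarrow G(z)$), and composing with the inductively supplied inclusion $G(y)\hookrightarrow \mathrm{colim}_{P\setminus\{z\}} G$ settles $y\neq z$. But $G(z)\to \mathrm{colim}_P G$ is the pushout of $L_z \to \mathrm{colim}_{P\setminus\{z\}} G$, and to conclude it is a closed inclusion you need $L_z \to \mathrm{colim}_{P\setminus\{z\}} G$ to already be one. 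That is a map from one colimit into another, not a map from a single $G(y)$, so your inductive hypothesis as stated does not apply. The clean repair is to prove the stronger statement: for every sieve (downward-closed subposet) $Q\subseteq P$, the map $\mathrm{colim}_Q G \to \mathrm{colim}_P G$ is a closed inclusion. One proves this by adjoining the elements of $P\setminus Q$ one at a time starting from a minimal element $w$ of $P\setminus Q$, so that $Q\cup\{w\}$ is again a sieve and the latching object of $w$ is supported entirely in $Q$; each adjunction is then a pushout along the cofibration $L_w\hookrightarrow G(w)$. Your objectwise lemma follows by taking $Q=\{y':y'\le y\}$, over which the colimit is $G(y)$ itself, and then $L_z\to\mathrm{colim}_{P\setminus\{z\}}G$ is the sieve inclusion for $\{y':y'<z\}\subseteq P\setminus\{z\}$.
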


\begin{proof} The simplicial mapping cylinder construction $Cyl(_-)$ applied to any morphism in {\bf f-s-sets} verifies the statement of the theorem in the simplest case $\cal C$ consists of two objects and one non-identity morphism. Suppose $\cal C$ has $n$ objects; we fix a total ordering on $obj({\cal C})$ that refines the partial ordering: $\{x_1 \prec x_2 \prec \dots \prec x_n\}$ where if $\phi_{x_i x_j}$ is a morphism in $\cal C$ then $i\le j$ (but not necessarily conversely). Let ${\cal C}(m)$ denote the full subcategory of $\cal C$ on objects $x_1,\dots,x_m$, with $F_m = F|_{{\cal C}(m)}$. By induction, we may assume the statement of the theorem for $F_m:{\cal C}(m)\to$ {\bf f-s-sets}, with cofibrant lift denoted by $\wt{F}_m$; with $\eta_m:\wt{F}_m\xrightarrow{\simeq} F_m$.
\vskip.2in

Now let ${\cal D}(m)$ denote the slice category ${\cal C}/x_{m+1}$; as ``$\prec$" is a refinement of the poset ordering ``$<$", the image of the forgetful functor $P_m:{\cal D}(m)\to {\cal C}; (y\to x_{m+1})\mapsto y$ lies in ${\cal C}(m)$. And as $\cal C$ is a poset category, the collection of morphisms $\{\phi_{y x_{m+1}}\}$ uniquely determine a map
\[
f_m : \underset{{\cal D}(m)}{colim}\ \wt{F}_m\circ P_m\xrightarrow{\eta_m} \underset{{\cal D}(m)}{colim}\  F_m\circ P_m \to F(x_{m+1})
\]
Define $\wt{F}_{m+1}:{\cal C}(m+1)\to$ {\bf f-s-sets} by
\begin{itemize}
\item $\wt{F}_{m+1}|_{{\cal C}(m)} = \wt{F}_m$;
\item $\wt{F}_{m+1}(x_{m+1}) = Cyl(f_m)$;
\item If $\phi_{x x_{m+1}}$ is a morphism from $x\in obj({\cal C}(m))$ to $x_{m+1}$, then
\[
\wt{F}_{m+1}(\phi_{x x_{m+1}}):\wt{F}_{m}(x) = \wt{F}_{m+1}(x)\to \wt{F}_{m+1}(x_{m+1})
\]
 is given as the composition
 \[
 \wt{F}_{m}(x) = \wt{F}_m\circ P_m(x\xrightarrow{\phi_{x x_{m+1}}} x_{m+1})\hookrightarrow 
 \underset{{\cal D}(m)}{colim}\ \wt{F}_m\circ P_m\hookrightarrow Cyl(f_m) = \wt{F}_{m+1}(x_{m+1})
 \]
\end{itemize}
where the first inclusion into the colimit over ${\cal D}(m)$ is induced by the inclusion of the object \newline
$(x\xrightarrow{\phi_{x x_{m+1}}} x_{m+1})\hookrightarrow obj({\cal D}(m))$. As all morphisms in ${\cal D}(m)$ map to simplicial inclusions under $\wt{F}_m\circ P_m$ the resulting map of $\wt{F}_m(x)$ into the colimit will also be a simplicial inclusion. Finally, the natural transformation $\eta_m:\wt{F}_m\to F_m$ is extended to $\eta_{m+1}$ on $\wt{F}_{m+1}$ by defining $\eta_{m+1}(x_{m+1}): \wt{F}_{m+1}(x_{m+1})\to F_{m+1}(x_{m+1})$ as the natural collapsing map $Cyl(f_m)\surj F(x_{m+1})$, which has the effect of making the diagram

\centerline{
\xymatrix{
\wt{F}_{m+1}(x)\ar[rr]^{\wt{F}_{m+1}(\phi_{xy})}\ar[dd]^{\eta_{m+1}(x)} && \wt{F}_{m+1}(y)\ar[dd]^{\eta_{m+1}(y)}\\
\\
F_{m+1}(x)\ar[rr]^{F_{m+1}(\phi_{xy})} && F_{m+1}(y)
}}
\vskip.2in

commute for morphisms $\phi_{xy}\in Hom({\cal C}_{m+1})$. This completes the induction step, and the proof.
\end{proof}

\begin{corollary}\label{cor:pres} Any topologically based $\cal C$-module $M$ admits a presentation by $\cal C$-modules $N_1\inj N_2\surj M$ where $N_i$ is an $\cal IPC$-module and $N_1\inj N_2$ is an isometric inclusion of $\cal IPC$-modules.
\end{corollary}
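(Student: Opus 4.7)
My plan is to deduce the corollary directly from the cofibrant replacement theorem by taking cycles and boundaries in the $n$-th chain module of the replaced diagram. Fix a geometrically based $\cal C$-module $M = H_n(F)$ for some $F:{\cal C}\to$ \textbf{f-s-sets}. The preceding theorem produces $\wt{F}$ with $\wt{F}(\phi_{xy})$ a closed simplicial inclusion for every morphism, together with an objectwise weak equivalence $\eta:\wt{F}\xrightarrow{\simeq} F$. Applying $H_n$ then yields a natural isomorphism $H_n(\wt{F})\xrightarrow{\cong} M$ of $\cal C$-modules, so it suffices to present $H_n(\wt{F})$.

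Next I consider the $\cal C$-module $C_n\circ \wt{F}$ sending $x$ to the normalized $n$-chains $C_n(\wt{F}(x))$ of the simplicial set $\wt{F}(x)$. Because each $\wt{F}(\phi_{xy})$ is a simplicial inclusion, the induced map on $n$-chains sends the distinguished basis of non-degenerate $n$-simplices of $\wt{F}(x)$ injectively into that of $\wt{F}(y)$. I equip $C_n(\wt{F}(x))$ with the inner product making this simplex basis orthonormal at every $x$. Then every transition map is an isometric embedding; its kernel is zero, so the defining condition for morphisms in $(IP/k)$ holds trivially, and $C_n\circ\wt{F}$ becomes an $\cal IPC$-module.

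Now set $N_2(x) := Z_n(\wt{F}(x))$ and $N_1(x) := B_n(\wt{F}(x))$, viewed as subspaces of $C_n(\wt{F}(x))$. Chain maps carry cycles to cycles and boundaries to boundaries, so $N_1$ and $N_2$ are sub-$\cal C$-modules of $C_n\circ\wt{F}$. Give each the inner product inherited from $C_n(\wt{F}(x))$. The restriction of an isometric embedding to a pair of sub-IP-spaces remains an isometric embedding with trivial kernel, so $N_1$ and $N_2$ are themselves $\cal IPC$-modules, and $N_1\inj N_2$ is an isometric inclusion of $\cal IPC$-modules in the sense required. Objectwise, $N_2(x)/N_1(x) = H_n(\wt{F}(x))$, which is naturally isomorphic to $M(x)$; this yields the surjection $N_2\surj M$ of $\cal C$-modules, hence the presentation $N_1\inj N_2\surj M$.

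The main obstacle is really already absorbed into the preceding cofibrant replacement theorem: without it there is no reason the transition maps on chains should be injective, let alone isometric for a canonical choice of inner product. Given that ingredient, the remaining content is the verification that cycles and boundaries inside an $\cal IPC$-module, equipped with the restricted inner product, form $\cal IPC$-submodules — which reduces to the elementary observation that isometric embeddings restrict to isometric embeddings on subspaces, with kernels and orthogonal complements behaving compatibly.
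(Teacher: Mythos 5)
Your proof is correct and follows essentially the same route as the paper: apply the cofibrant replacement theorem to force all transition maps to be simplicial inclusions, equip $C_n$ of the replaced diagram with the inner product making the simplex basis orthonormal (so the transition maps become isometric embeddings), and then take $N_1 = B_n$, $N_2 = Z_n$ as $\cal IPC$-submodules with $M$ as the cokernel. The only cosmetic difference is that you carry the replaced functor $\wt{F}$ explicitly while the paper relabels it as $F$ valued in the subcategory \textbf{i-f-s-sets}; the substance is identical.
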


\begin{proof} By the previous result and the homotopy invariance of homology, we may assume $M = H_n(F)$ where $F :{\cal C}\to$ {\bf i-f-s-sets}, the subcategory of {\bf f-s-sets} on the same set of objects, but where all morphisms are simplicial set injections. In this case, for each object $x$, $C_n(F(x)) = C_n(F(x);k)$ admits a canonical inner product determined by the natural basis of $n$-simplices $F(x)_n$, and each morphism $\phi_{xy}$ induces an injection of basis sets $F(x)_n\inj F(y)_n$, resulting in an isometric inclusion $C_n(F(x))\inj C_n(F(y))$. In this way the functor $C_n(F) := C_n(F;k):{\cal C}\to (vect/k)$ inherits a natural $\cal IPC$-module structure. If $Q$ is an $\cal IPC$-module where all of the morphisms are isometric injections, then any $\cal C$-submodule $Q'\subset Q$, equipped with the same inner product, is an $\cal IPC$-submodule of $Q$. Now $C_n(F)$ contains the $\cal C$-submodules $Z_n(F)$ ($n$-cycles) and $B_n(F)$ ($n$-boundaries); equipped with the induced inner product the inclusion $B_n(F)\hookrightarrow Z_n(F)$ is an isometric inclusion of $\cal IPC$-modules, for which $M$ is the cokernel $\cal C$-module. 
\end{proof}

[Note: The results for this subsection have been stated for {\bf f-s-sets}; similar results can be shown for {\bf f-s-complexes} after fixing a systematic way for representing the mapping cyclinder of a map of simplicial complexes as a simplicial complex; this typically involves barycentrically subdividing.]


\subsection{A K\"unneth Theorem for topologically based $\cal C$-modules} The following theorem represents the natural generalization of the traditional K\"unneth Theorem for singular homology with field coefficients.

\begin{theorem} Let $F_i : {\cal C}_i\to \text{\bf f-s-sets}, 1\le i\le n$, with $M_i := H_*(F_i;k)$ denoting the corresponding functor from ${\cal C}_i$ to the category $(gr\mhyphen vect/k)$ of finite-dimensional graded vector spaces over $k$. Similarly, define
\begin{gather*}
F := diag(F_1\times F_2\times\dots F_n):{\cal C}_1\times{\cal C}_2\times\dots {\cal C}_n\to \text{\bf f-s-sets}\\
M := H_*(F;k)
\end{gather*}
Then there is a natural isomorphism
\begin{equation}\label{eqn:kt1}
M\cong M_1\otimes M_2\otimes\dots\otimes M_n
\end{equation}
of topological ${\cal C}_1\times{\cal C}_2\times\dots {\cal C}_n$-modules which induces isomorphisms
\begin{equation}\label{eqn:kt2}
{\cal F}(M)\cong {\cal F}(M_1)\otimes{\cal F}(M_2)\otimes\dots {\cal F}(M_n)
\end{equation}
\begin{equation}\label{eqn:kt3}
{\cal F}(M)_*\cong {\cal F}(M_1)_*\otimes{\cal F}(M_2)_*\otimes\dots {\cal F}(M_n)_*
\end{equation}
\end{theorem}

\begin{proof} The proof is short, as the main technical component has already been established above. The equality in (\ref{eqn:kt1}) follows from the traditional K\"unneth Theorem. Applying Theorem \ref{thm:kunneth} to the RHS of (\ref{eqn:kt1}) then yields the isomorphisms of (\ref{eqn:kt2}) and (\ref{eqn:kt3}). We note that as $M$ is graded, the tensor products in all three equations are to be taken in the graded sense.
\end{proof}

\begin{corollary} If $M_i, 1\le i\le n$ (as in the previous theorem) are tame for $1\le i\le n$, then so is $M_1\otimes M_2\otimes\dots M_n$.
\end{corollary}

This applies in particular to the case that each $M_i$ is a finite 1-dimensional persistence module.
\vskip.3in

\section*{Funding and Competing Interests} No funding was received to assist with the preparation of this manuscript.  The authors have no competing interests to declare that are relevant to the content of this article.

\section*{Data Availability Statement} Data sharing is not applicable to this article as no new data were created or analyzed in this study.

\vskip.5in

\end{document}